\newcommand{\im}{\mathrm{Im}}
\newcommand{\dom}{\mathrm{dom}}
\newcommand{\RX}{\ensuremath{\,(-\infty,+\infty]}}
\newcommand{\RR}{\ensuremath{\mathbb R}}
\newcommand{\cA}{\mathcal{A}}
\newcommand{\cF}{\mathcal{F}}
\newcommand{\D}{\Delta}
\newtheorem{theorem}{Theorem}[section]
\newtheorem{definition}[theorem]{Definition}
\newtheorem{corollary}[theorem]{Corollary}
\newtheorem{example}[theorem]{Example}
\theoremstyle{plain}{\theorembodyfont{\rmfamily}}
\newtheorem{proposition}[theorem]{Proposition}
\theoremstyle{plain}{\theorembodyfont{\rmfamily}

\theoremstyle{plain}{\theorembodyfont{\rmfamily}
\newtheorem{remark}[theorem]{Remark}

\begin{document}
\title{Abstract Convex Optimal Antiderivatives }

\author{
Sedi Bartz\thanks{Department of Mathematics,
The Technion -- Israel Institute of Technology,
32000 Haifa, Israel.
E-mail: \texttt{bartz@techunix.technion.ac.il}.}
\ \ and\ \
Simeon Reich\thanks{Department of Mathematics,
The Technion -- Israel Institute of Technology,
32000 Haifa, Israel.
E-mail: \texttt{sreich@techunix.technion.ac.il}.}
}

\date{December 03, 2011}

\maketitle

% \vskip 8mm

\begin{abstract} \noindent
Having studied families of antiderivatives and their envelopes in the setting of classical convex analysis, we now extend and apply these notions and results in settings of abstract convex analysis. Given partial data regarding a $c$-subdifferential, we consider the set of all $c$-convex $c$-antiderivatives that comply with the given data. Under a certain assumption, this set is not empty and contains both its lower and upper envelopes. We represent these optimal antiderivatives by explicit formulae. Some well known functions are, in fact, optimal $c$-convex $c$-antiderivatives. In one application, we point out a natural minimality property of the Fitzpatrick function of a $c$-monotone mapping, namely that it is a minimal antiderivative. In another application, in metric spaces, a constrained Lipschitz extension problem fits naturally the convexity notions we discuss here. It turns out that the optimal Lipschitz extensions are precisely the optimal antiderivatives. This approach yields explicit formulae for these extensions, the most particular case of which recovers the well known extensions due to McShane and Whitney.
\end{abstract}

\noindent {\bfseries 2010 Mathematics Subject Classification:}
47H04, 47H05, 49N15, 52A01.

\noindent {\bfseries Keywords and phrases:} Abstract convexity, convex function, cyclically monotone operator, Fitzpatrick function, Lipschitz extension, maximal monotone operator, minimal antiderivative, subdifferential operator.

\section{Introduction}

\emph{Abstract convex analysis} (or \emph{generalized convexity}) has attracted more and more attention in recent years. Now this topic is studied continually, both theoretically and from the point of view of applications. It has turned out to be a useful tool for solving optimization problems which are not of convex nature in the classical sense. For recent examples see \cite{burrub, pen, rol, rub, sin} and references therein. For instance, abstract convex analysis has been applied to duality theory for global optimization problems. In another particular application, $c$-convexity and $c$-cyclic monotonicity are key features in grasping, as well as in recent proofs of, the Kantorovich duality theorem in optimal transport. Optimal transport plans are $c$-cyclically monotone and admit $c$-convex $c$-antiderivatives. A detailed presentation of this topic and its history can be found in \cite{vil}. Such applications of cyclic monotonicity to optimal transport can be traced back as early as to \cite{Bre1, Bre2}. Our aim here is to introduce and study families of $c$-convex $c$-antiderivatives and in particular, optimal $c$-convex $c$-antiderivatives. This extends our recent discussion in \cite{br}, which took place in the context of classical convex analysis. The general framework of abstract convex analysis allows us to apply optimal antiderivatives in situations which are not of convex nature in the classical sense. These applications shed some new light and extend previous studies in the theory of representation of monotone mappings by convex functions and regarding the problem of extending Lipschitz functions, as we describe in more detail below.

The following well known definitions are the foundations of our discussion. Unless otherwise specified, throughout the paper $X$ and $Y$ are arbitrary  sets and $c:X\times Y\to\RR$ is an arbitrary function. We say that a function $f:X\to\RX$ is proper if $\dom (f):=\{x\in X\ |\ f(x)<\infty\}$ is not empty.

\begin{definition}[c-transform]
Let $X$ and $Y$ be nonempty sets and let $c:X\times Y\to\mathbb{R}$ be a
function. Given
a function $f:X\to[-\infty,\infty]$, its $c$-transform $f^c:Y\to[-\infty,\infty]$ is defined by
\begin{equation}
f^c(y):=\sup_{x\in X}\ c(x,y)-f(x),\ \ \ y\in Y.
\end{equation}
Similarly, the $c$-transform of a function $g:Y\to[-\infty,\infty]$ is the function $g^c:X\to[-\infty,\infty]$ defined by
\begin{equation}
g^c(x):=\sup_{y\in Y}\ c(x,y)-g(y),\ \ \ x\in X.
\end{equation}
\end{definition}

\begin{definition}[c-convexity]\label{cconvexity}
A proper function $f:X\to\RX$ is said to be $c$-convex if there exists a (necessarily proper) function $g:Y\to\RX$ such that $f=g^c$. The set of all $c$-convex functions defined on $X$ is denoted by $\Gamma_c(X)$. The function $(f^c)^c$ is the $c$-convexification of $f$ and is denoted by $f^{cc}$.
\end{definition}

The $c$-transform of a function is also known as its \emph{c-conjugate} function. This generalization of Fenchel's conjugate function from classical convex analysis was introduced and studied by Moreau in \cite{mor}. In other, more general, discussions, one encounters convexity of a function with respect to a family of elementary (sometimes referred to as \emph{affine}) functions defined on $X$. That is, a function is said to be convex with respect to a set of functions $A$ if it is the upper envelope of a subset of $A$. In our discussion, $A=\{c(\cdot,y)+r\ |\ y\in Y,\ r\in\mathbb{R}\}$. Sometimes, the function $c$ is allowed to take the values $\pm\infty$. Many other distinctions within our definitions and beyond them, and the theories to which they lead, were considered by different authors; see, for example, \cite{rub, sin}. In our discussion we focus our attention on the settings in the above definitions. When referring to the theoretical study of $c$-convexity, the function $c$ is called a \emph{coupling function} between $X$ and $Y$, while in the particular application to the study of optimal transport, as in \cite{vil}, it is said to be the \emph{cost function}.

Clearly,
\begin{equation}
c(x,y)\leq f(x)+f^c(y)\ \ \ \mathrm{for\ all}\ x\in X\ \mathrm{and}\ y\in Y.
\end{equation}
The case of equality is captured in the following definition of the $c$-subdifferential. We will denote the graph of a multivalued mapping $M:X\rightrightarrows Y$ by $G(M):=\{(x,y)|\ y\in M(x)\}$. The mapping $M$ is called proper if $\dom(M):=\{ x\in X |\ M(x)\neq\emptyset\}$ is not empty.

\begin{definition}[c-subdifferential and c-antiderivative]\label{ctrans}
Let $f:X\to\RX$ be a proper function. The $c$-subdifferential of $f$ is the mapping
$\partial_c f:X\rightrightarrows Y$ defined by
\begin{align}
\partial_c f(x):=\ &\big\{y\in Y\ |\ f(x)+c(x',y)\leq f(x')+c(x,y)\ \ \forall x'\in X\big\}\\
\nonumber\\
=\ &\big\{y\in Y\ |\ f(x)+f^c(y)=c(x,y)\big\}.
\end{align}
When $\partial_c f(x)\neq\emptyset$, we say that $f$ is $c$-subdifferentiable at $x$. When $M:X\rightrightarrows Y$ and $G(M)\subset G(\partial_c f)$, we say that $f$ is a $c$-antiderivative of $M$.
\end{definition}

For a proper function $f:X\to\RX$, it is clear that $\dom(\partial_c f)\subset\dom (f)$, that is, $f$ is finite where it is $c$-subdifferentiable. We will  continue our discussion of elementary $c$-convex analysis in Section 2.

Our main interest here is in solving the following problem and then applying our solution in particular situations. Suppose that $f:X\to\RX$ is a $c$-antiderivative of the mapping $M:X\rightrightarrows Y$ and suppose that $S$ is a nonempty subset of $\dom (M)$. We are interested in the $c$-convex solutions $h:X\to\RX$ of the problem
\begin{equation}\label{prob}
G(M)\subset G(\partial_c h)\ \ \ \ \ \mathrm{such\ that}\ \ \ \ \ h|_S=f|_S.
\end{equation}
One may consider this as a constrained $c$-convex extension problem. We are interested in extending the function $f|_S$ to a $c$-convex function defined on $X$ while keeping it a $c$-antiderivative of $M$. In Section 3 we address this problem and show that the family of solutions, denoted by $\cA_{[c,f|_S,M]}$, is not empty and contains its upper envelope $\gamma_{[c,f|_S,M]}$. This envelope is then the maximal $c$-convex $c$-antiderivative of $M$ that agrees with $f$ on $S$. We apply duality relations to conclude that there also is a minimal $c$-convex $c$-antiderivative $\alpha_{[c,f|_S,M]}\in\cA_{[c,f|_S,M]}$. In the special case where $S=\dom (M)$, we deduce explicit formulae for $\alpha_{[c,f|_S,M]}$ and $\gamma_{[c,f|_S,M]}$.

It is well known that Rockafellar's famous characterization \cite{roc} of cyclically monotone mappings from classical convex analysis extends to $c$-cyclically monotone mappings with respect to $c$-convexity. Moreover, Rockafellar's antiderivative plays the same r\^{o}le as in the classical case. It is an explicitly represented $c$-convex $c$-antiderivative of a given $c$-cyclically monotone mapping. In Section 4 we recall these facts and then apply Rockafellar's antiderivative in order to explicitly construct the minimal antiderivative in the general case of \eqref{prob}. Besides this advantage, this approach is an alternative way to the one in Section 3. It enables us to solve problem \eqref{prob} without appealing to duality results.

Let $(X,d)$ be a metric space. Recall that a function $f:X\to\mathbb{R}$ is called $K$-Lipschitz with $K\geq 0$ if
$$
|f(x)-f(y)|\leq Kd(x,y)\ \ \ \ \ \ \ \ \ \ \mathrm{for\ all}\ x,y\in X.
$$
Let $S$ be a nonempty subset of $X$ and let $f:S\to\mathbb{R}$ be $K$-Lipschitz. Then the well known extension theorem of McShane \cite{mcs} and Whitney \cite{whi} asserts that $f$ extends to a $K$-Lipschitz function which is defined on all of $X$. In particular, the minimal and maximal $K$-Lipschitz extensions of $f$ are given explicitly by
\begin{equation}\label{McSWhi}
\alpha(x)=\sup_{s\in S}\ [f(s)-Kd(x,s)]\ \ \ \ \ \ \ \ \ \mathrm{and}\ \ \ \ \ \ \ \ \ \gamma(x)=\inf_{s\in S}\ [f(s)+Kd(x,s)],
\end{equation}
respectively. In Section 5 we address the following constrained Lipschitz extension problem: Suppose that $f:A\subset X\to\mathbb{R}$ is a $K$-Lipschitz function and let $G$ be a subset of $A\times A$ such that
\begin{equation}\label{cond}
f(y)-f(x)=Kd(x,y)\ \ \ \ \ \ \mathrm{for\ all}\ \ (x,y)\in G.
\end{equation}
We set $G=G(M)$ where $M:A\rightrightarrows A$ is the mapping the graph of which is the set $G$. Given a nonempty subset $S$ of $\dom (M)$, the problem is to extend $f|_S$ to a $K$-Lipschitz function which is defined on all of $X$ and still satisfies (\ref{cond}). In particular, we look for the minimal and maximal extensions. We accomplish this and provide formulae for  the minimal and maximal $K$-Lipschitz extensions that satisfy (\ref{cond}). Apart from particular cases, formulae \eqref{McSWhi} are not necessarily solutions to this generalized problem. If one assumes the McShane and Whitney extension result, then the existence of optimal Lipschitz extensions in our discussion is a rather simple consequence. However, approaching this problem through $-d$-convexity, we do not need to rely on the McShane and Whitney theorem. Furthermore, this approach leads to the representation of the optimal extensions with explicit formulae of which formulae (\ref{McSWhi}) are a particular case.  When $c=-d$, then $-d$-convex functions are precisely the $1$-Lipschitz functions. In this case, $f$ is a $-d$-antiderivative of $M$ and the problem fits naturally in the context of optimal antiderivatives. We note that this discussion is never empty since the identity mapping can play the r\^{o}le of $M$ with respect to any function $f$. This particular case recovers the McShane and Whitney extensions from our discussion. Several approaches to the theory of Lipschitz functions and to the problem of extending Lipschitz functions with tools from classical convex analysis and abstract convex analysis have been proposed in the last three decades. For instance, in Section 5 we generalize previous dicussions and results in this direction which appear in \cite{evemaa, hir, leg, rol, sin}.

In the framework of classical convex analysis and monotonicity, the associated Fitzpatrick function of a monotone mapping was first defined and studied in \cite{fit}. Fitzpatrick also considered an associated family of functions with a similar underlying connection to the given monotone mapping, now known as the Fitzpatrick family. It was the beginning of what has become an intense study, often referred to as the ``representation of monotone operators by convex functions''. Apart from their theoretical interest, these functions found application in new accessible convex analytic proofs of sum and range theorems for monotone mappings and of other results in monotone operator theory. A well known property of the Fitzpatrick function, which was originally observed by Fitzpatrick in \cite{fit}, is that when the underlying mapping is maximal monotone, then the Fitzpatrick function is the minimal function in the Fitzpatrick family. In \cite{br} we studied how a monotone mapping gives rise to an associated family of convex antiderivatives and we identified the Fitzpatrick function as the minimal convex antiderivative. This extended the notion of minimality of the Fitzpatrick function to also hold for monotone mappings which are not necessarily maximal monotone. In Section 6 we generalize most of the discussion in \cite{br} and show that it continues to hold in the setting of $c$-convexity and $c$-monotone mappings.

\section{$c$-Convex Analysis Preliminaries}

We proceed with our discussion of abstract convexity by recalling some basic properties of $c$-convex functions and $c$-antiderivatives, which we utilize later and which are easily verified using the definitions:\\
\\
$\bullet$ For any constant $C$, if $f$ is a $c$-antiderivative of $M$, then $f+C$
is also a $c$-antiderivative of $M$.\\
\\
$\bullet$ For any constant $C$, If $f$ is $c$-convex, say $f=g^c$, then
$f+C$ is also $c$-convex since $f+C=(g-C)^c$.\\
\\
$\bullet$ The $c$-transform reverses order. That is, $h\leq f\ \Rightarrow\ f^c\leq h^c$.

Recall that the indicator function of a set $S\subset X$ is the function $\iota_{S}:X\to\RX$, defined by
$$
\iota_{S}(x)=\Big\{\begin{array}{c}
                     0\ \ \ x\in S \\
                     \infty\ \ x\notin S.
                   \end{array}
$$
\\
$\bullet$ For every $y\in Y$, the function $c(\cdot,y):X\to\RX$ is $c$-convex
since $c(\cdot,y)=\iota_{\{y\}}^c$.\\

\begin{proposition}\label{subinclusion}
Let $f,g:X\to\RX$ satisfy $f\leq g$. If at the point $x\in X$ the equality $f(x)=g(x)$ holds, then $\partial_c f(x)\subset\partial_c g(x)$.
\end{proposition}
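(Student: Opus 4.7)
The plan is to unfold the first characterization of the $c$-subdifferential in Definition \ref{ctrans} and chain together two inequalities. Fix an arbitrary $y\in\partial_c f(x)$; I want to verify that $y\in\partial_c g(x)$, i.e., that
\[
g(x)+c(x',y)\leq g(x')+c(x,y)\qquad\text{for every }x'\in X.
\]

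The three hypotheses will be used exactly once each. First, I substitute $g(x)=f(x)$ on the left-hand side. Second, I invoke the defining inequality $f(x)+c(x',y)\leq f(x')+c(x,y)$ coming from $y\in\partial_c f(x)$. Third, I bound $f(x')$ above by $g(x')$ using $f\leq g$ pointwise, which affects only the right-hand side. Concatenating these three steps immediately gives the desired inequality, so $y\in\partial_c g(x)$, which proves $\partial_c f(x)\subset\partial_c g(x)$.

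There is no real obstacle here: the argument is essentially a one-line chase through the definition. I note in passing that one could try instead to use the second characterization $\partial_c f(x)=\{y\mid f(x)+f^c(y)=c(x,y)\}$, but then one would have to invoke the order-reversing property of the $c$-transform recalled just before the proposition to relate $f^c$ and $g^c$, which is a slightly longer route; the inequality formulation is cleaner.
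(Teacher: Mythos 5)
Your proof is correct and is essentially identical to the paper's: both fix $y\in\partial_c f(x)$ and chain $g(x)+c(x',y)=f(x)+c(x',y)\leq f(x')+c(x,y)\leq g(x')+c(x,y)$ using the equality at $x$, the subdifferential inequality, and $f\leq g$ in that order. Nothing further is needed.
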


\begin{proof}
Suppose that $y\in\partial_c f(x)$. Then for every $x'\in X$ we have
$$
g(x)+c(x',y)=f(x)+c(x',y)\leq f(x')+c(x,y)\leq g(x')+c(x,y),
$$
which implies that $y\in\partial_c g(x)$.
\end{proof}

It is a crucial and elementary fact in our discussion that Definitions \ref{cconvexity} and \ref{ctrans} are preserved by upper envelopes. Unless otherwise specified, throughout the paper the families we consider are indexed by an arbitrary index set. 

\begin{proposition}[convexity of the upper envelope of $c$-convex functions]\label{upenvconv}
If $\{f_s\}$ is a family of functions such that $f_s:X\to\RX$ is $c$-convex for every $s$, then the upper envelope, $f=\sup_s f_s$, is also $c$-convex, when it is proper.
\end{proposition}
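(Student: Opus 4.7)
The plan is to exploit the duality formulation of $c$-convexity directly: a proper function $h$ is $c$-convex if and only if $h = g^c$ for some proper $g: Y \to \RX$. So for each index $s$ I fix a proper $g_s: Y \to \RX$ with $f_s = g_s^c$, and then I try to produce a single proper $g: Y \to \RX$ such that $f = g^c$.

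The natural candidate is the pointwise infimum $g(y) := \inf_s g_s(y)$. The computation I would carry out is the exchange of suprema
\begin{equation*}
g^c(x) = \sup_{y \in Y}\Big(c(x,y) - \inf_s g_s(y)\Big) = \sup_{y \in Y}\sup_s \big(c(x,y) - g_s(y)\big) = \sup_s g_s^c(x) = \sup_s f_s(x) = f(x),
\end{equation*}
which uses only the identity $-\inf_s a_s = \sup_s (-a_s)$ and the interchangeability of two suprema. Thus $f = g^c$, and once I verify that $g$ qualifies as a proper function into $\RX$, I am done.

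What remains is to rule out the two pathological possibilities for $g$, and here the hypothesis that $f$ is proper does all the work. If there were some $y_0 \in Y$ with $g(y_0) = -\infty$, then already $g^c(x) \geq c(x,y_0) - g(y_0) = +\infty$ for every $x$, forcing $f \equiv +\infty$, a contradiction. If instead $g \equiv +\infty$, the supremum defining $g^c(x)$ is taken over an empty set, so $g^c \equiv -\infty$, again contradicting the properness of $f$ (functions into $\RX$ do not attain $-\infty$). Consequently $g: Y \to \RX$ and $\dom(g) \neq \emptyset$, i.e., $g$ is proper.

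I do not expect a real obstacle here; the content is almost entirely the supremum-swap, which is mechanical. The only subtle point is the properness bookkeeping for $g$, and the clean resolution is simply that the properness of $f$ (assumed in the statement) forces both forbidden behaviors of $g$ to be ruled out automatically. No appeal to Proposition~\ref{subinclusion} or to further structure on $c$ is needed.
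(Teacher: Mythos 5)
Your proposal is correct and takes essentially the same route as the paper: both choose the candidate $g:=\inf_s g_s$, rule out $g$ attaining $-\infty$ by showing this would force $f\equiv+\infty$ against properness (the paper does this via a sequence $g_{s_n}(y_0)\to-\infty$, you do it directly in extended arithmetic), and conclude $f=g^c$ by the supremum interchange, which the paper leaves as a ``straightforward verification.'' Your extra check that $g\not\equiv+\infty$ is harmless but automatic, since each witness $g_s$ is necessarily proper by Definition~\ref{cconvexity}.
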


\begin{proof}
Suppose that for every $s$, the function $f_s$ is $c$-convex, say $f_s=g^c_s$. Then the function $g:=\inf_s g_s$ does not attain the value $-\infty$, that is $g:Y\to\RX$. Indeed, suppose that for some $y_0\in Y$ we have $\inf_s g_s(y_0)=-\infty$. Let $\{s_n\}$ be a sequence such that $g_{s_n}(y_0)\to-\infty$. Then for every $x\in X$, we obtain
$$
f(x)=\sup_s f_s(x)\geq\sup_n f_{s_n}(x)=\sup_n\sup_y[c(x,y)-g_{s_n}(y)]\geq\lim_{n\to\infty}[c(x,y_0)-g_{s_n}(y_0)]=\infty,
$$
which implies that $f$ cannot be a proper function in this case. A straightforward verification now yields $f=g^c$.
\end{proof}

\begin{proposition}[upper envelope of $c$-antiderivatives of a common mapping]\label{upenvantider}
If $\{f_s\}$ is a family of functions such that $f_s:X\to\RX$ is a $c$-antiderivative of the mapping $M:X\rightrightarrows Y$ for every $s$, then the upper envelope, $f=\sup_s f_s$, is also a $c$-antiderivative of $M$, when it is proper.
\end{proposition}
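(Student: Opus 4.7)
The plan is to fix an arbitrary pair $(x_0,y_0)\in G(M)$ and show that $y_0\in\partial_c f(x_0)$, thereby verifying $G(M)\subset G(\partial_c f)$. Since each $f_s$ is a $c$-antiderivative of $M$, for every index $s$ we have $y_0\in\partial_c f_s(x_0)$, which by Definition \ref{ctrans} gives the family of inequalities
\begin{equation*}
f_s(x_0) + c(x',y_0) \leq f_s(x') + c(x_0,y_0) \qquad \text{for all } x'\in X.
\end{equation*}
Combining with the pointwise bound $f_s(x')\leq f(x')$, and observing that $c(x',y_0)\in\mathbb{R}$ is independent of $s$, taking the supremum over $s$ on the left-hand side yields
\begin{equation*}
f(x_0) + c(x',y_0) = \sup_s\bigl[f_s(x_0)+c(x',y_0)\bigr] \leq f(x')+c(x_0,y_0),
\end{equation*}
which is precisely the condition defining $y_0\in\partial_c f(x_0)$.

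The only subtlety is ensuring that $f(x_0)$ is finite so that the resulting inequality is genuinely informative (i.e.\ not a vacuous $\infty\leq\infty$ situation). Here I would invoke the properness of $f$: pick some $\bar x\in\dom(f)$, rewrite the antiderivative inequality with $x'=\bar x$ as
\begin{equation*}
f_s(x_0) \leq f_s(\bar x) + c(x_0,y_0) - c(\bar x,y_0) \leq f(\bar x) + c(x_0,y_0) - c(\bar x,y_0),
\end{equation*}
and observe that the right-hand side is a finite constant independent of $s$, so $f(x_0)=\sup_s f_s(x_0)<\infty$. This uses in an essential way that $c$ is real-valued, as assumed throughout the paper.

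The argument is essentially a direct computation and the main obstacle, if any, is just the careful handling of extended-real arithmetic. In particular, the passage of the supremum through the sum is legitimate only because $c(x',y_0)$ is a finite constant (not a function of $s$), and the conclusion $y_0\in\partial_c f(x_0)$ only makes sense once $f(x_0)$ is known to be finite, which is why the properness hypothesis on $f$ must be used in the manner above rather than merely assumed for aesthetic reasons.
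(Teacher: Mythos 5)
Your proposal is correct and follows essentially the same route as the paper's proof: both fix a pair $(x,y)\in G(M)$, combine the subdifferential inequality $f_s(x)+c(x',y)-c(x,y)\leq f_s(x')$ with the pointwise bound $f_s(x')\leq f(x')$, and pass to the supremum over $s$. Your extra check that $f(x)<\infty$ is a careful elaboration rather than a new idea, since it already follows from the derived inequality evaluated at any point of $\dom(f)$ (which is nonempty by the properness hypothesis).
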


\begin{proof}
Let $(x,y)\in M$ and $x'\in X$. Since $f_s$ is a $c$-antiderivative of $M$ for every $s$, we have
\begin{equation}
f_s(x)+c(x',y)-c(x,y)\leq f_s(x')\leq f(x')
\end{equation}
for every $s$. It follows that $f(x)+c(x',y)-c(x,y)\leq f(x')$, and
consequently, that $f$ is a $c$-antiderivative of $M$.
\end{proof}

A major part of the theory of $c$-convex functions is made possible by the following well known fundamental property of the $c$-transform \cite{mor}:

\begin{proposition}[c-convexification criterion for c-convexity]\label{ccc}
If $f:X\to[-\infty,\infty]$ is any function, then
$$
f^{ccc}=f^c.
$$
Consequently, a proper function $f$ is $c$-convex if and only if $f^{cc}=f$.
\end{proposition}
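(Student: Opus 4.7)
The plan is to derive the identity $f^{ccc}=f^c$ from the basic Young-type inequality $c(x,y)\leq f(x)+f^c(y)$ together with the order-reversing property of the $c$-transform recalled in the preliminaries. The argument is a standard Galois-connection sandwich and requires no appeal to duality or to any topological structure on $X,Y$.

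First I would record that for any function $f:X\to[-\infty,\infty]$ the inequality
\[
f^{cc}(x)=\sup_{y\in Y}\bigl[c(x,y)-f^c(y)\bigr]\leq f(x)
\]
holds for every $x\in X$. This is immediate from the defining inequality $c(x,y)\leq f(x)+f^c(y)$, rearranged as $c(x,y)-f^c(y)\leq f(x)$ and then supremized over $y$; one only has to be mildly careful at points $y$ where $f^c(y)=+\infty$, but the term is then $-\infty$ and does not spoil the bound.

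Next, applying the order-reversing property of the $c$-transform (the third bullet in Section~2) to $f^{cc}\leq f$ yields $f^c\leq(f^{cc})^c=f^{ccc}$. For the reverse inequality, I would apply the first step not to $f$ itself but to the function $f^c:Y\to[-\infty,\infty]$: by exactly the same reasoning, $(f^c)^{cc}\leq f^c$, i.e.\ $f^{ccc}\leq f^c$. The two inequalities together give $f^{ccc}=f^c$, which is the first assertion.

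For the consequence, suppose $f$ is proper and $c$-convex, so $f=g^c$ for some proper $g:Y\to\RX$. Then $f^{cc}=(g^c)^{cc}=g^{ccc}=g^c=f$ by what we just proved. Conversely, if $f$ is proper and satisfies $f^{cc}=f$, then setting $g:=f^c$ we have $f=g^c$; moreover, since $f$ is proper, picking any $x_0\in\dom(f)$ gives $g(y)=f^c(y)\geq c(x_0,y)-f(x_0)>-\infty$, so $g:Y\to\RX$, and $g$ is proper (otherwise $g\equiv+\infty$ would force $f=g^c\equiv-\infty$, contradicting properness of $f$). Thus $f\in\Gamma_c(X)$. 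No real obstacle is expected; the only delicate point is the bookkeeping with $\pm\infty$ values when invoking properness in the second part, which is handled by the observation that $f^c$ is automatically bounded below by $-\infty$ on all of $Y$ whenever $\dom(f)\neq\emptyset$.
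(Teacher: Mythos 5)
Your proof is correct and takes essentially the same route as the paper: your two halves, $f^c\leq f^{ccc}$ (from $f^{cc}\leq f$ plus order reversal) and $(f^c)^{cc}\leq f^c$, correspond exactly to the paper's substitutions $x'=x$ and $y'=y$ in the expanded triple transform $f^{ccc}(y)=\sup_x\inf_{y'}\sup_{x'}[c(x,y)-c(x,y')+c(x',y')-f(x')]$, merely packaged as two lemmas instead of one nested formula. The extended-arithmetic bookkeeping is handled correctly (unambiguous since $c$ is real-valued), and your properness check for $g=f^c$ in the converse matches the paper's implicit claim that $f^c:Y\to\RX$.
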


\begin{proof}
By definition, for any $y\in Y$ we have
\begin{equation}
f^{ccc}(y)=\sup_x\inf_{y'}\sup_{x'}\ [c(x,y)-c(x,y')+c(x',y')-f(x')].
\end{equation}
Letting $y'=y$, we see that $f^{ccc}(y)\leq f^c(y)$, while letting $x'=x$ we obtain that $f^{ccc}(y)\geq f^c(y)$. Now, if $f$ is $c$-convex, then $f=g^c$ for some function $g:Y\to\RX$ and so $f^{cc}=g^{ccc}=g^c=f$. Conversely, if $f=f^{cc}$, then $f$ is $c$-convex by definition because it is the $c$-transform of the function $f^c:Y\to\RX$.
\end{proof}

We can now justify the use of the notion of $c$-convexification as follows:

\begin{corollary}[c-convexification property]
For any proper function $f:X\to\RX$, $f^{cc}$ is the largest $c$-convex function majorized by $f$. That is, if $h:X\to\RX$ is $c$-convex and $h\leq f$, then $h\leq f^{cc}\leq f$.
\end{corollary}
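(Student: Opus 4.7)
The plan is to prove the two inequalities $h \leq f^{cc} \leq f$ separately, using only the fundamental inequality displayed after Definition \ref{cconvexity}, the order-reversing property of the $c$-transform listed at the start of Section 2, and Proposition \ref{ccc}.

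For the upper bound $f^{cc} \leq f$, I would start from $c(x,y) \leq f(x) + f^c(y)$, which holds for all $(x,y) \in X \times Y$. Rearranging as $c(x,y) - f^c(y) \leq f(x)$ and taking the supremum over $y \in Y$ gives exactly $f^{cc}(x) = \sup_{y \in Y}[c(x,y) - f^c(y)] \leq f(x)$, as required.

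For the lower bound, I would apply the order-reversal property of the $c$-transform twice to the hypothesis $h \leq f$: first it yields $f^c \leq h^c$, and applying it again reverses the inequality once more to give $h^{cc} \leq f^{cc}$. Since $h$ is $c$-convex, Proposition \ref{ccc} gives $h^{cc} = h$, and hence $h \leq f^{cc}$. Combined with the first step, this delivers $h \leq f^{cc} \leq f$.

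I do not expect any serious obstacle; the proof is essentially a two-line calculation from already-established facts. The only subtle point worth noting is properness: for the conclusion to fit the formal definition of $c$-convexity, $f^{cc}$ must be proper, and Definition \ref{cconvexity} does require this. However, $f^{cc} = (f^c)^c$ is already presented as the $c$-transform of the function $f^c : Y \to \RXX$, so $c$-convexity follows immediately once properness is verified; and the existence of any $c$-convex minorant $h \leq f$ forces $f^{cc} \geq h$ to be proper automatically, so the statement is non-vacuous exactly in the relevant case.
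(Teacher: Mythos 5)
Your proof is correct and follows essentially the same route as the paper: the bound $f^{cc}\leq f$ via the inequality $c(x,y)\leq f(x)+f^c(y)$ is just the paper's substitution $x'=x$ in the expanded formula for $f^{cc}$ written in a different guise, and the lower bound via double order-reversal plus $h=h^{cc}$ from Proposition \ref{ccc} is exactly the paper's argument. Your closing remark on properness of $f^{cc}$ is a small, valid point of care that the paper's proof leaves implicit.
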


\begin{proof}
By definition, for any $x\in X$ we have
\begin{equation}
f^{cc}(x)=\sup_{y}\inf_{x'}\ [c(x,y)-c(x',y)+f(x')].
\end{equation}
Letting $x'=x$, we conclude that $f^{cc}(x)\leq f(x)$. Now, if $h:X\to\RX$ is a $c$-convex function such that $h\leq f$, then applying the fact that the $c$-transform reverses order and the above criterion for $c$-convexity, we get $h=h^{cc}\leq f^{cc}\leq f$.
\end{proof}

Furthermore, we can now relate the $c$-sundifferential of a function with the $c$-subdifferential of its $c$-transform. Recall that the mapping $M^{-1}:Y\rightrightarrows X$ is defined by $x\in M^{-1}(y)\Leftrightarrow y\in M(x)$.

\begin{corollary}[$c$-subdifferential of the $c$-transform]\label{duality}
If the function $f:X\to\RX$ is a $c$-antiderivative of the mapping $M:X\rightrightarrows Y$, then $f^c$ is a $c$-antiderivative of $M^{-1}$. Moreover, if $f$ is $c$-subdifferentiable at $x$, then $f^{cc}(x)=f(x)$. If, in addition, $f$ is $c$-convex, then 
\begin{equation}
\partial_c f^c=(\partial_c f)^{-1}.
\end{equation}
\end{corollary}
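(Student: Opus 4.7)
My plan is to unpack each of the three assertions using the equality reformulation of the $c$-subdifferential, namely $y\in\partial_c f(x)\Leftrightarrow f(x)+f^c(y)=c(x,y)$, together with the pointwise inequality $f(x)+f^c(y)\geq c(x,y)$ that is built into the definition of the $c$-transform. The roles of $X$ and $Y$ are symmetric, so the same equality holds on the $Y$-side: $x\in\partial_c f^c(y)\Leftrightarrow f^c(y)+f^{cc}(x)=c(x,y)$.

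For the first claim, fix $(x,y)\in G(M)\subset G(\partial_c f)$. I want to show $x\in\partial_c f^c(y)$, which (by the inequality form of Definition \ref{ctrans} applied to $f^c$ on $Y$) reads $f^c(y)+c(x,y')\leq f^c(y')+c(x,y)$ for every $y'\in Y$. Since $y\in\partial_c f(x)$, we have $f^c(y)=c(x,y)-f(x)$, and from the definition of $f^c$ at $y'$, $f^c(y')\geq c(x,y')-f(x)$. Subtracting the former from the latter rearranges exactly into the required inequality, so $(y,x)\in G(\partial_c f^c)$, i.e.\ $G(M^{-1})\subset G(\partial_c f^c)$.

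For the second claim, pick any $y_0\in\partial_c f(x)$, so $f(x)=c(x,y_0)-f^c(y_0)$. By definition,
$$f^{cc}(x)=\sup_{y\in Y}\bigl[c(x,y)-f^c(y)\bigr]\geq c(x,y_0)-f^c(y_0)=f(x).$$
The reverse inequality $f^{cc}\leq f$ is the content of the preceding $c$-convexification corollary, so $f^{cc}(x)=f(x)$.

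For the third claim, assume in addition that $f$ is $c$-convex, so by Proposition \ref{ccc} we have $f^{cc}=f$ everywhere. Then for every $(x,y)\in\ZZZ$,
$$x\in\partial_c f^c(y)\ \Longleftrightarrow\ f^c(y)+f^{cc}(x)=c(x,y)\ \Longleftrightarrow\ f^c(y)+f(x)=c(x,y)\ \Longleftrightarrow\ y\in\partial_c f(x),$$
which is precisely $\partial_c f^c=(\partial_c f)^{-1}$. The only potential obstacle is the bookkeeping of which variable plays which role on each side of the $c$-transform; the symmetry of the coupling makes this essentially automatic once the equality characterization of the subdifferential is used consistently, and no additional hypothesis beyond what is stated is needed.
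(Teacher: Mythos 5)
Your proof is correct and takes essentially the same approach as the paper: both arguments rest on the Fenchel--Young-type inequality $c(x,y)\leq f(x)+f^c(y)$, the equality characterization of the $c$-subdifferential, and the facts $f^{cc}\leq f$ and $f^{cc}=f$ for $c$-convex $f$ (via $(f^c)^c=f^{cc}$). The only cosmetic difference is that for the first claim you verify the inequality form of $x\in\partial_c f^c(y)$ directly against all $y'\in Y$, whereas the paper reads the same conclusion off the two-sided chain $c(x,y)\leq f^{cc}(x)+f^c(y)\leq f(x)+f^c(y)$.
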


\begin{proof}
Let $f$ be a $c$-antiderivative of $M$. If $f$ is $c$-convex, then $f^{cc}=f$. It follows that the set of points $(x,y)$ where the equality  $c(x,y)=f^{cc}(x)+f^c(y)$ holds is precisely the set of points where the equality $c(x,y)=f(x)+f^c(y)$ holds. This implies that $\partial_c f^c=(\partial_c f)^{-1}$. In the general case, since $f^{cc}\leq f$, then for every $(x,y)$,
\begin{equation}\label{ctranssub}
c(x,y)\leq f^{cc}(x)+f^c(y)\leq f(x)+f^c(y).
\end{equation}
It now follows that the set of points $(x,y)$ where the equality $c(x,y)=f(x)+f^c(y)$ holds is contained in the set of points where the equality $c(x,y)=f^{cc}(x)+f^c(y)$ holds, that is, $(x,y)\in G(\partial_c f) \Rightarrow (y,x)\in G(\partial_c f^c)$, which means that $G((\partial_c f)^{-1})\subset G(\partial_c f^c)$. Finally, if $f$ is $c$-subdifferentiable at $x$, say $y\in\partial_c f(x)$, then both the inequalities in \eqref{ctranssub} become equalities and we obtain $f^{cc}(x)=c(x,y)-f^c(y)=f(x)$.
\end{proof}
\\

At this point we describe the manner in which classical convex analysis is captured by $c$-convex analysis. Let $X$ be a locally convex separated topological vector space, let $X^*$ be its dual and set $\langle x^*,x\rangle=x^*(x)$ for $x\in X$ and $x^*\in X^*$. Given a function $f:X\to\RX$, its Fenchel conjugate is the function $f^*:X^*\to\RX$ defined by
$$
f^*(x^*)=\sup_{x\in X}\ \langle x^*,x\rangle-f(x).
$$
We see that when we let $Y=X^*$ and $c(x,y)=\langle y,x\rangle$ for $x\in X$ and $y\in Y$, then $f^c=f^*$ for a function $f:X\to\RX$. However, we have $f^{cc}=f^{**}|_{X}$, when we identify $X$ with its canonical embedding in $X^{**}$. In this setting, if $f$ is proper, then it is well known that $f$ is convex and lower semicontinuous if and only if $f=f^{cc}$. The characterization of $c$-convex functions obtained in Proposition \ref{ccc}, namely, that the proper function $f$ satisfies $f=f^{cc}$ if and only if $f$ is $c$-convex, is not a direct generalization of the above fact from classical convex analysis. It is, of course, true that in the classical setting, the proper function $f$ is $c$-convex if and only if $f=f^{cc}$. However, the fact that $f$ is $c$-convex if and only if $f$ is lower semicontinuous and convex is not a consequence of Proposition \ref{ccc}. We are thus led to consider the following observations. Recall that the epigraph of a function $f:X\to\RX$ is the set $\{(x,t)\in X\times\mathbb{R}\ |\ f(x)\leq t\}$. Perhaps, since a proper function is $c$-convex if and only if its epigraph is the non-empty intersection of a family of epigraphs of $c$-convex functions, one should say that a ``$c$-convex'' function is, in fact, ``outerly $c$-convex''. In the classical definition of a convex function, convexity can be referred to as ``inner convexity'', that is, through line segments inside the epigraph. For a proper and lower semicontinuous function, what unifies ``inner convexity'' and ``outer convexity'' in the classical case, that is, the reason that a proper function is ``innerly $c$-convex'' (convex and lower semicontinuous) if and only if it is ``outerly $c$-convex'' (c-convex), is the Hahn-Banach separation theorem.

In Sections 5 and 6 we study families of functions which turn out to be families of $c$-convex $c$-antiderivatives. The families in Section 5 are convex. The fact that the families in Section 6 are closed under nontrivial convex combinations is well known in the classical case. Before ending this section we draw the reader's attention to the $c$-convexity and $c$-subdifferentiability properties of nontirivial convex combinations of functions. We refer to these properties in the next section where we define families of $c$-convex $c$-antiderivatives.

\begin{proposition}\label{convcomb}
Suppose that $g$ and $h$ are $c$-antiderivatives of the mapping $M:X\rightrightarrows Y$ and let $\lambda\in (0,1)$. Then $\lambda g+(1-\lambda)h$ is a $c$-antiderivative of $M$.
\end{proposition}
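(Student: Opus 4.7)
The plan is to unwind the definition of $c$-antiderivative and observe that the required inequality is preserved under nontrivial convex combinations. Specifically, fix $(x,y) \in G(M)$ and an arbitrary $x' \in X$. Since $g$ and $h$ are both $c$-antiderivatives of $M$, the definition of the $c$-subdifferential gives the two inequalities
\begin{equation*}
g(x)+c(x',y)\leq g(x')+c(x,y),\qquad h(x)+c(x',y)\leq h(x')+c(x,y).
\end{equation*}

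Next I would multiply the first inequality by $\lambda$ and the second by $1-\lambda$ (both nonnegative) and add them. Because $\lambda+(1-\lambda)=1$, the terms $c(x',y)$ and $c(x,y)$ combine cleanly, yielding
\begin{equation*}
\bigl[\lambda g(x)+(1-\lambda)h(x)\bigr]+c(x',y)\leq \bigl[\lambda g(x')+(1-\lambda)h(x')\bigr]+c(x,y),
\end{equation*}
which is exactly the inequality defining $y\in\partial_c\bigl(\lambda g+(1-\lambda)h\bigr)(x)$. Since $(x,y)\in G(M)$ and $x'\in X$ were arbitrary, this shows $G(M)\subset G\bigl(\partial_c(\lambda g+(1-\lambda)h)\bigr)$.

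The only subtlety worth mentioning is that the convex combination is formed pointwise in $\RX$, so I should verify that the arithmetic makes sense and that the resulting function is proper. This is immediate from the fact noted earlier in the paper that $\dom(\partial_c f)\subset\dom(f)$: since $y\in\partial_c g(x)\cap\partial_c h(x)$, both $g(x)$ and $h(x)$ are finite, so $\lambda g(x)+(1-\lambda)h(x)$ is finite and the multiplication by $\lambda$ and $1-\lambda$ preserves the inequalities. In particular, $\dom(M)\subset\dom(\lambda g+(1-\lambda)h)$, so properness of the convex combination is automatic whenever $M$ itself is proper. There is no real obstacle here; the proposition is essentially a direct linearity computation.
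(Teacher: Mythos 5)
Your proof is correct and follows exactly the paper's own argument: multiply the two subdifferential inequalities by $\lambda$ and $1-\lambda$, add them, and conclude $y\in\partial_c(\lambda g+(1-\lambda)h)(x)$. The extra remark on finiteness via $\dom(\partial_c f)\subset\dom(f)$ is a harmless (and accurate) addition but not a substantive difference.
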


\begin{proof}
Let $(x,y)\in G(M)$. Then $y\in \partial_c g(x)$ and $y\in \partial_c h(x)$. For every $x'\in X$, summing up the inequalities
$$
\lambda g(x)+\lambda c(x',y)\leq\ \lambda g(x')+\lambda c(x,y)
$$
and
$$
(1-\lambda) h(x)+(1-\lambda) c(x',y)\leq\ (1-\lambda) h(x')+(1-\lambda) c(x,y),
$$
we get
$$
(\lambda g+(1-\lambda) h)(x)+ c(x',y)\leq\ (\lambda g+(1-\lambda) h)(x')+ c(x,y),
$$
which implies that $y\in \partial_c (\lambda g+(1-\lambda)h)(x)$ and consequently that $G(M)\subset G(\partial_c (\lambda g+(1-\lambda)h))$.

\end{proof}

Convex combinations of $c$-convex functions may fail, in general, to be $c$-convex. Examples are easily constructed. In particular, we end this section with the following example where it is also easy to identify all $c$-convex functions on $X$ and $Y$.

\begin{example}\label{exampleA}
Let $X=\mathbb{R}$ and let $Y$ be a set of two points, say $Y=\{a,b\}$. Let $c:X\times Y\to\mathbb{R}$ be defined by
$$
c(x,y)=\Big\{\begin{array}{c}
                \ x\ \ \ \ \ \ \ \ y=a\\
                -x\ \ \ \ \ \ \ \ y=b.
              \end{array}
$$
Then the functions in $\Gamma_c(X)$, that is, the $c$-convex functions on $X$,  are the functions of the following types:
\begin{equation}\label{example}
1)\ x-p,\ \ \ \ \ \ 2)\ -x-q,\ \ \ \ \ \ 3)\ |x-r|-s,\ \ \ \ \ \ \ \ p,q,r,s\in\mathbb{R}.
\end{equation}
Consequently, $\Gamma_c(X)$ is not closed under nontrivial convex combinations. In particular, the functions $x$ and $|x|$ are $c$-convex $c$-antiderivatives of the mapping $M:X\rightrightarrows Y$, defined by $G(M):=\{(x,a)|\ 0\leq x\}$. Their convex combination
$$
\frac{x}{2}+\frac{|x|}{2}=\Big\{\begin{array}{c}
                                       0\ \ \ \ \ \ x\leq 0 \\
                                       x\ \ \ \ \ \ 0\leq x
                                     \end{array}
$$
is a $c$-antiderivative of $M$ which fails to be $c$-convex. On the other hand, all proper functions on $Y$ are $c$-convex.
\end{example}

\begin{proof}
We prove \eqref{example}. Since there are 3 types of proper functions $g:Y\to\RX$,  these functions have 3 types of $c$-transforms, that is, 3 types of $c$-convex functions on X:
\begin{align}
1)\ \ \ &g(a)\in\mathbb{R}\ \ \mathrm{and}\ \ g(b)=\infty\ \ \Rightarrow\ \ \ \ g^c(x)=x-g(a);\nonumber\\
\nonumber\\
2)\ \ \ &g(a)=\infty\ \ \mathrm{and}\ \ g(b)\in\mathbb{R}\ \ \Rightarrow\ \ \ \ g^c(x)=-x-g(b);\nonumber\\
\nonumber\\
3)\ \ \ &g(a)\in\mathbb{R}\ \ \mathrm{and}\ \ g(b)\in\mathbb{R}\ \ \ \Rightarrow\ \ \ \  g^c(x)=\big|x-(g(a)-g(b))/2\big|-(g(a)+g(b))/2.\nonumber
\end{align}
In cases 1 and 2 we see that all real values for $p$ and $q$ in \eqref{example} are possible. In the third case, since the transformation $(g(a),g(b))\mapsto (1/2)(g(a)-g(b),g(a)+g(b))$ is onto $\mathbb{R}^2$, all real values for $r$ and $s$ in \eqref{example} are possible. In all three cases, $c$-transforming the function $g^c$, we get $g=g^{cc}$.
\end{proof}

\section{The Family $\cA_{[c,f|_s,M]}$ of Antiderivatives and Duality}

Our main purpose is to extend the notions and results from classical convex analysis which we have presented in \cite{br} to the setting of $c$-convexity. To this end, we first consider families of $c$-convex $c$-antiderivatives as follows.

\begin{definition}
Given a mapping $M:X\rightrightarrows Y$, a $c$-antiderivative $f$ of $M$ and a subset $S$ of $\dom (M)$, we denote the set of all $c$-convex functions $h:X\to\RX$ which satisfy
\begin{equation}
G(M)\subset G(\partial_c h)\ \ \ \ and \ \ \ h|_S=f|_S
\end{equation}
by $\cA_{[c,f|_S,M]}$.
\end{definition}

In the above setting, since $M(S)$ is a subset of $\dom (M^{-1})$, Corollary \ref{duality} makes it is also possible to consider the $c$-dual problem: $f^c$ is a $c$-antiderivative of $M^{-1}$. We therefore denote the set of $c$-convex solutions $h:Y\to\RX$ of the problem
\begin{equation}
G(M^{-1})\subset G(\partial_c h)\ \ \ \ and \ \ \ h|_{M(S)}=f^c|_{M(S)}
\end{equation}
by $\cA_{[c,f^c|_{M(S)},M^{-1}]}$.

In the following result we use the $c$-convexification in order to establish the nonemptiness of $\cA_{[c,f|_S,M]}$. Then the existence of a maximal element follows. We collect duality relations between $\cA_{[c,f|_S,M]}$ and $\cA_{[c,f^c|_{M(S)},M^{-1}]}$ which, in particular, imply the existence of a minimal member. By $c$-transforming the maximal element of one family we arrive at the minimal element of the dual family and vice versa.

\begin{theorem}\label{main}
Suppose that $f:X\to\RX$ is a $c$-antiderivative of the mapping $M:X\rightrightarrows Y$.  Suppose further that $\emptyset\neq S\subset\mathrm{dom} (M)$. Then $\cA_{[c,f|_S,M]}$ is nonempty and contains both its upper envelope, that is, the function $\gamma_{[c,f|_S,M]}:X\to\RX$ defined by
$$
\gamma_{[c,f|_S,M]}(x):=\sup\{h(x)\ |\ h\in\cA_{[c,f|_S,M]}\},
$$
as well as its lower envelope, the function $\alpha_{[c,f|_S,M]}:X\to\RX$ defined by
$$
\alpha_{[c,f|_S,M]}(x):=\inf\{h(x)\ |\ h\in\cA_{[c,f|_S,M]}\}.
$$
In fact, if $h:X\to\RX$ is any function such that
\begin{equation}\label{problem}
G(M)\subset G(\partial_c h)\ \ \ \ and \ \ \ h|_S=f|_S,
\end{equation}
 then $\alpha_{[c,f|_S,M]}\leq h$ and $h^c\in\cA_{[c,f^c|_{M(S)},M^{-1}]}$. If $h$ is $c$-convex, then
\begin{equation}\label{conjuA}
h\in\cA_{[c,f|_S,M]}\ \ \Leftrightarrow\ \ \ h^c\in\cA_{[c,f^c|_{M(S)},M^{-1}]}.
\end{equation}
Furthermore,
\begin{equation}\label{conjugamma}
\alpha_{[c,f|_S,M]}^c=\gamma_{[c,f^c|_{M(S)},M^{-1}]}\ \ \ \ and\ \ \ \
\gamma_{[c,f|_S,M]}^c=\alpha_{[c,f^c|_{M(S)},M^{-1}]}.
\end{equation}
\\
In the case where  $S=\mathrm{dom}(M)$, we have
\begin{align}
&\gamma_{[c,f|_{\dom (M)},M]}=(f+\iota_{\dom (M)})^{cc}\label{gammafulldom}\\
and\ \ \ \ \ &\nonumber\\
&\alpha_{[c,f|_{\mathrm{dom}(M)},M]}(x)=\ (f^c+\iota_{\mathrm{Im}(M)})^c(x)=\ \sup_{(s,t)\in G(M)}\ [f(s)+c(x,t)-c(s,t)],\ \ \ \ x\in X.\label{conjufulldom}
\end{align}
In this case, if $h:X\to\RX$ is $c$-convex, then
\begin{equation}\label{fulldomcrit}
h\in\cA_{[c,f|_{\dom (M)},M]}\ \ \ \ \Leftrightarrow\ \ \ \ \ \alpha_{[c,f|_{\dom (M)},M]}\leq h\leq \gamma_{[c,f|_{\dom (M)},M]}.
\end{equation}
\end{theorem}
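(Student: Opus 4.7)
The plan is to exploit $c$-convexification to produce explicit elements of $\cA_{[c,f|_S,M]}$ and then pass to the dual family via \eqref{conjuA} to recover the lower envelope. For nonemptiness, $f^{cc}$ is the natural candidate: by Proposition \ref{ccc} it is $c$-convex; by Corollary \ref{duality} it coincides with $f$ wherever $f$ is $c$-subdifferentiable, hence on $\dom(M)\supset S$; and for $(x,y)\in G(M)$ the identities $f(x)+f^c(y)=c(x,y)$, $f^{cc}(x)=f(x)$, and $(f^{cc})^c=f^c$ combine to give $y\in\partial_c f^{cc}(x)$. Since every $h\in\cA_{[c,f|_S,M]}$ agrees with $f$ on $S$, the envelope $\gamma_{[c,f|_S,M]}$ is proper, and Propositions \ref{upenvconv}--\ref{upenvantider} then place it in $\cA_{[c,f|_S,M]}$.

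Next, I would establish \eqref{conjuA}. If $h\in\cA_{[c,f|_S,M]}$ is $c$-convex, then Corollary \ref{duality} gives $\partial_c h^c=(\partial_c h)^{-1}$, so $G(M^{-1})\subset G(\partial_c h^c)$; and for any $s\in S$ with $y\in M(s)$, $h^c(y)=c(s,y)-h(s)=c(s,y)-f(s)=f^c(y)$, where the first equality uses $y\in\partial_c h(s)$ and the second uses that $h$ and $f$ agree on $S$. The converse direction is symmetric. From \eqref{conjuA} the map $h\mapsto h^c$ is a bijection between $\cA_{[c,f|_S,M]}$ and $\cA_{[c,f^c|_{M(S)},M^{-1}]}$ (its inverse being $g\mapsto g^c$), so exchanging the two suprema yields $\alpha_{[c,f|_S,M]}^c(y)=\sup_h h^c(y)=\gamma_{[c,f^c|_{M(S)},M^{-1}]}(y)$, while the sup--inf inequality combined with $\gamma_{[c,f|_S,M]}^c\in\cA_{[c,f^c|_{M(S)},M^{-1}]}$ pins down $\gamma_{[c,f|_S,M]}^c=\alpha_{[c,f^c|_{M(S)},M^{-1}]}$. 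Applying the first step to the dual problem places $\gamma_{[c,f^c|_{M(S)},M^{-1}]}$ in the dual family; its $c$-transform equals $\alpha_{[c,f|_S,M]}$ by \eqref{conjugamma} and lies in $\cA_{[c,f|_S,M]}$ by \eqref{conjuA}. Finally, for arbitrary $h$ satisfying \eqref{problem}, the same argument applied with $h$ in place of $f$ gives $h^{cc}\in\cA_{[c,f|_S,M]}$, whence $\alpha_{[c,f|_S,M]}\le h^{cc}\le h$ and $h^c=(h^{cc})^c\in\cA_{[c,f^c|_{M(S)},M^{-1}]}$.

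In the case $S=\dom(M)$, every $h\in\cA_{[c,f|_{\dom(M)},M]}$ is majorized by $f+\iota_{\dom(M)}$, so the $c$-convexification property forces $h\le(f+\iota_{\dom(M)})^{cc}$. Conversely, $(f+\iota_{\dom(M)})^{cc}$ is sandwiched between $f^{cc}$ and $f$ on $\dom(M)$ and hence equals $f$ there, while the identity $(f+\iota_{\dom(M)})^c=f^c$ on $\im(M)$ (immediate since the supremum defining $f^c(y)$ is attained in $\dom(M)$ for any $y\in\im(M)$) lets the same $c$-subdifferentiability argument used for $f^{cc}$ place $(f+\iota_{\dom(M)})^{cc}$ into $\cA$, proving \eqref{gammafulldom}. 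Dualizing and invoking Proposition \ref{ccc} yields $\alpha_{[c,f|_{\dom(M)},M]}=((f^c+\iota_{\im(M)})^{cc})^c=(f^c+\iota_{\im(M)})^c$, and unfolding this supremum while substituting $f^c(t)=c(s,t)-f(s)$ for $(s,t)\in G(M)$ produces the explicit formula in \eqref{conjufulldom}. For \eqref{fulldomcrit}, one direction is by definition of the envelopes; for the other, a $c$-convex $h$ sandwiched between $\alpha$ and $\gamma$ agrees with $f$ on $\dom(M)$ (where $\alpha=\gamma=f$), and for $(x,y)\in G(M)$ the inequality $\alpha\le h$ gives $h^c\le\alpha^c$, so $h(x)+h^c(y)\le\alpha(x)+\alpha^c(y)=c(x,y)$, which forces equality and thus $y\in\partial_c h(x)$.

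The technical snag I expect is that Proposition \ref{subinclusion} propagates $c$-subdifferentials from the smaller function to the larger one, which is the wrong direction for transferring $G(M)\subset G(\partial_c f)$ to $G(M)\subset G(\partial_c f^{cc})$. The remedy is the sharper observation implicit in Corollary \ref{duality}, namely that $f^{cc}$ actually inherits the whole $c$-subdifferential of $f$ at any point of $c$-subdifferentiability; this inheritance is the workhorse throughout. The $S=\dom(M)$ case additionally hinges on the identity $(f+\iota_{\dom(M)})^c=f^c$ on $\im(M)$, a short but essential check.
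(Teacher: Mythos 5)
Your proposal is correct and takes essentially the same route as the paper: nonemptiness via $f^{cc}$ together with Corollary \ref{duality}, the upper envelope via Propositions \ref{upenvconv} and \ref{upenvantider}, the minimal element recovered as the $c$-transform of the dual family's upper envelope through \eqref{conjuA}--\eqref{conjugamma}, and the $S=\dom(M)$ formulae via $f+\iota_{\dom(M)}$ and $f^c+\iota_{\im(M)}$ with Proposition \ref{ccc}. The only variations are local and harmless --- the sup-interchange computation giving $\alpha_{[c,f|_S,M]}^c=\gamma_{[c,f^c|_{M(S)},M^{-1}]}$ in place of the paper's $c$-transforming of the sandwich $\alpha_{[c,f|_S,M]}\leq h\leq\gamma_{[c,f|_S,M]}$, and the conjugacy-equality argument for \eqref{fulldomcrit} where the paper invokes Proposition \ref{subinclusion} --- while the ``snag'' you flag (Proposition \ref{subinclusion} transferring subdifferentials in the wrong direction) is repaired by exactly the inheritance mechanism the paper extracts from Corollary \ref{duality}.
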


\begin{proof}
In order to obtain the nonemptines of $\cA_{[c,f|_S,M]}$, we employ Corollary \ref{duality}. Since $f$ is a proper $c$-antiderivative of $M$, the function $f^{cc}$ is a $c$-antiderivative of $M$. Since $S$ is a set of points where $f$ is $c$-subdifferentiable, $f^{cc}$ agrees with $f$ at any point in $S$. The $c$-convexity of $f^{cc}$ now implies that $f^{cc}\in\cA_{[c,f|_S,M]}$. Since $\gamma_{[c,f|_S,M]}|_S=f|_S$, it is the proper upper envelope of $\cA_{[c,f|_S,M]}$. Applying the upper envelope properties \ref{upenvconv} and \ref{upenvantider}, we conclude that the upper envelope $\gamma_{[c,f|_S,M]}$ is a $c$-convex $c$-antiderivative of $M$ and therefore belongs to $\cA_{[c,f|_S,M]}$. Let $h:X\to\RX$ satisfy \eqref{problem}. Then, again, according to Corollary \ref{duality}, $h^c$ is a $c$-convex $c$-antiderivative of $M^{-1}$. Given $t\in M(S)$, let $s\in S$ be such that $t\in M(s)$. Then
\begin{equation}\nonumber
h^c(t)=c(s,t)-h(s)=c(s,t)-f(s)=f^c (t).
\end{equation}
We conclude that $h^c|_{M(S)}=f^c|_{M(S)}$ and consequently that $h^c\in\cA_{[c,f^c|_{M(S)},M^{-1}]}$. Conversely, if $h^c\in\cA_{[c,f^c|_{M(S)},M^{-1}]}$ then $h^{cc}\in \cA_{[c,f^{cc}|_{M^{-1}(M(S))},(M^{-1})^{-1}]}$. If, in addition, $h$ is $c$-convex, then since $(M^{-1})^{-1}=M$, $f^{cc}|_S=f|_S$, $S\subset M^{-1}(M(S))$ and since $h^{cc}=h$, we conclude that $h\in\cA_{[c,f|_S,M]}$. Furthermore, we see that if $h:X\to\RX$ is any function which satisfies  \eqref{problem}, then $h^c\leq\gamma_{[c,f^c|_{M(S)},M^{-1}]}$. Consequently, the function $\gamma^c_{[c,f^c|_{M(S)},M^{-1}]}\in\cA_{[c,f^{cc}|_{M^{-1}(M(S))},(M^{-1})^{-1}]}\subset\cA_{[c,f|_S,M]}$ satisfies $\gamma^c_{[c,f^c|_{M(S)},M^{-1}]}\leq h^{cc}\leq h$. Since $h$ was arbitrary, we conclude that $\gamma^c_{[c,f^c|_{M(S)},M^{-1}]}$ is , in fact, the lower envelope of $\cA_{[c,f|_S,M]}$, that is, we recognize it as $\alpha_{[c,f|_S,M]}$. As a consequence of this discussion and of \eqref{conjuA}, we conclude, in particular, that for any function $h:X\to\RX$,
$$
h^{cc}\in\cA_{[c,f|_S,M]}\ \ \Leftrightarrow\ \ \ h^c\in\cA_{[c,f^c|_{M(S)},M^{-1}]}.
$$
Note that an arbitrary function in $\cA_{[c,f^c|_{M(S)},M^{-1}]}$ may be written as $h^c$ for some function $h\in\cA_{[c,f|_S,M]}$.  Taking now $h\in \cA_{[c,f|_S,M]}$ and $c$-transforming the inequality
$$\alpha_{[c,f|_S,M]}\leq h\leq \gamma_{[c,f|_S,M]}$$
we get
$$\gamma_{[c,f|_S,M]}^c\leq h^c\leq\alpha_{[c,f|_S,M]}^c.$$
Since $h^c$ was an arbitrary function in $\cA_{[c,f^c|_{M(S)},M^{-1}]}$, we conclude that $\gamma_{[c,f|_S,M]}^c$ and $\alpha_{[c,f|_S,M]}^c$ are the lower envelope and upper envelops of $\cA_{[c,f^c|_{M(S)},M^{-1}]}$, respectively. This completes the proof of \eqref{conjugamma} and the proof of the general case of Theorem \ref{main}.

We now consider the case where $S=\dom (T)$. Since $f$ is a $c$-antiderivative of $M$, we may apply Proposition \ref{subinclusion} and conclude that the function $f+\iota_{\dom (M)}$ is the greatest $c$-antiderivative $M$ that agrees with $f$ on $\dom (M)$. It follows that its $c$-convexification $(f+\iota_{\dom (M)})^{cc}$ is the greatest $c$-convex $c$-antiderivative of $M$ that agrees with $f$ on $\dom (M)$, that is , \eqref{gammafulldom} holds. Indeed, if $h$ is a $c$-convex $c$-antiderivative of $M$ that agrees with $f$ on $\dom (M)$, then $h\leq f+\iota_{\dom (M)}$, and consequently, $h=h^{cc}\leq (f+\iota_{\dom (M)})^{cc}$. Since $h$ was an arbitrary element of $\cA_{[c,f|_{\dom (M)},M]}$, we conclude that $(f+\iota_{\dom (M)})^{cc}$ is the greatest function in $\cA_{[c,f|_{\dom (M)},M]}$, which completes the proof of \eqref{gammafulldom}. In order to prove \eqref{conjufulldom} we proceed in a similar way. Since $f^c$ is a $c$-antiderivative of $M^{-1}$, applying Proposition \ref{subinclusion}, we conclude that the function $f^c+\iota_{\im(M)}$ is the greatest antiderivative of $M^{-1}$ that agrees with $f^c$ on $\im(M)$. Hence $(f^c+\iota_{\im(M)})^c\in\cA_{[c,f|_{\dom (M)},M]}$. For a function $h:X\to\RX$ we now have
$$
h\in\cA_{[c,f|_{\dom (M)},M]}\ \ \Rightarrow\ \ \ h^c\in \cA_{[c,f^c|_{\im(M)},M^{-1}]}\ \ \Rightarrow\ \ h^c\leq f^c+\iota_{\im(M)}\ \ \Rightarrow\ \ (f^c+\iota_{\im(M)})^c\leq h^{cc}=h.
$$
Since $h$ was arbitrary, we conclude that $(f^c+\iota_{\im(M)})^c$ is the lower envelope of $\cA_{[c,f|_{\dom (M)},M]}$. This verifies the left equality of  \eqref{conjufulldom}. The right equality of \eqref{conjufulldom} is evident from the following computation:
$$
(f^c+\iota_{\mathrm{Im}(M)})^c(x)=\ \sup_{t\in\mathrm{Im}(M)}\ \ c(x,t)-f^c(t)=\ \ \sup_{(s,t)\in M}\ [c(x,t)+f(s)-c(s,t)].
$$
Finally, in order to prove \eqref{fulldomcrit}, we first note that the implication $\Rightarrow$ is trivial. Conversely, if $\alpha_{[c,f|_{\dom (M)},M]}\leq h\leq \gamma_{[c,f|_{\dom (M)},M]}$, then $h|_{\dom (M)}=f|_{\dom (M)}$. Applying Proposition \ref{subinclusion}, we conclude that $h$ is a $c$-antiderivative of $M$ and consequently that $h\in\cA_{[c,f|_{\dom (M)},M]}$.
\end{proof}

We see that the consequences of Proposition \ref{ccc} play a crucial role in the proof. In particular, the nonemptiness of $\cA_{[c,f|_s,M]}$ and the existence of a minimal member follow from these duality results. A different approach, which does not employ these duality properties of the $c$-transform and which gives rise, constructively, to a minimal member, is presented in the next section. In the particular case where $S=\dom (M)$, such an argument is already at hand.

\begin{remark}
In the case where $S=\dom (T)$, we present a straightforward proof of the nonemptiness of $\cA_{[c,f|_{\dom (M)},M]}$ and of formula (\ref{conjufulldom}) that does not use consequences of the $c$-convexification property.
Define $\alpha:X\to\RX$ by
$$
\alpha(x)= \ \sup_{(s,t)\in M}\ [f(s)+c(x,t)-c(s,t)].
$$
For $x\in\dom (M)$ we see, by choosing $s=x$, that  $\alpha(x)\geq f(x)$. Since $G(M)\subset G(\partial_c f)$, for every $x\in X$ we have
$$
\alpha(x)=\sup_{(s,t)\in M}\ [f(s)+c(x,t)-c(s,t)]\leq f(x).
$$\\
It follows that $\alpha|_{\mathrm{dom}(M)}=f|_{\mathrm{dom}(M)}$. Consequently, for every $(s,t)\in M$ and $x\in X$, we see that
\begin{equation}
\alpha(s)+c(x,t)-c(s,t)\
=\ f(s)+c(x,t)-c(s,t)\ \leq\ \alpha(x),\nonumber
\end{equation}\\
which implies that $G(M)\subset G(\partial_c\alpha)$. Since $\alpha$ is the proper upper envelope of $c$-convex functions and since it satisfies  \eqref{problem}, we conclude that $\alpha\in\cA_{[c,f|_{\dom (M)},M]}$. If $h$ is any function which satisfies \eqref{problem}, then for every $x\in X$,
\begin{equation}
\alpha(x)=\ \sup_{(s,t)\in M}\ [h(s)+c(x,t)-c(s,t)] \leq\ h(x),\nonumber
\end{equation}
which verifies that $\alpha=\alpha_{[c,f|_{\mathrm{dom}(M)},M]}$.
\end{remark}

We remark in passing that \eqref{conjuA} is an extension of (16) from \cite{br}, which dealt with the classical case. Clearly, the converse implication there only holds when $h$ is lower semicontinuous and convex.

Before ending this section we wish to make a remark regarding the closedness of $\cA_{[c,f|_S,M]}$ under nontrivial convex combinations. It is clear that if $g|_S=h|_S=f|_S$, then for any $0<\lambda<1$ we have $(\lambda g+(1-\lambda)h)|_S=f|_S$. However, in general, it may happen that $\cA_{[c,f|_S,M]}$ is not closed under nontrivial convex combinations:

\begin{example}
In the setting of Example \ref{exampleA}, consider the mapping $M$, the set $S=\{x\in\RR\ |\ x\geq 0\}$ and the function $f:X\to\RX$ defined by  $f(x)=x$. Then
$$
\alpha_{[c,f|_S,M]}(x)=x,\ \ \ \gamma_{[c,f|_S,M]}(x)=|x|\ \ \ \mathrm{and}\ \ \ \cA_{[c,f|_S,M]}=\big\{x,\ |x-p|+p\ \ \big|\ p\leq 0\big\}.
$$
Observe that in this example any $c$-convex function $h$ such that $h|_S=f|_S$ is automatically a $c$-antiderivative of $M$, that is, $\cA_{[c,f|_S,M]}=\Gamma_c(X)\cap\{h:X\to\RX\ \big|\ h|_S=f|_S\}$.\\
We have already seen that any nontrivial convex combination of $x$ and $|x|$ is not $c$-convex. Hence, $\cA_{[c,f|_S,M]}$ is not closed under nontrivial convex combinations in this case.
\end{example}

Finally, recalling Proposition \ref{convcomb}, we note the following Corollary.

\begin{corollary}\label{convA}
Let $g,h\in \cA_{[c,f|_S,M]}$ and suppose that $0<\lambda<1$. Then $\lambda g+(1-\lambda)h\in \cA_{[c,f|_S,M]}$ as soon as $\lambda g+(1-\lambda)h$ is $c$-convex.
\end{corollary}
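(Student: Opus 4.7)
The plan is to check the three defining conditions of membership in $\cA_{[c,f|_S,M]}$ for the function $\lambda g+(1-\lambda)h$: namely, $c$-convexity, the inclusion $G(M)\subset G(\partial_c(\lambda g+(1-\lambda)h))$, and the restriction identity on $S$. The first condition is assumed in the statement of the corollary (indeed, this is exactly the additional hypothesis that compensates for the fact, illustrated in the preceding example, that convex combinations of $c$-convex functions need not be $c$-convex).

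For the $c$-subdifferential inclusion I would simply quote Proposition \ref{convcomb}: since $g$ and $h$ are both $c$-antiderivatives of $M$ and $\lambda\in(0,1)$, that proposition gives immediately that $\lambda g+(1-\lambda)h$ is a $c$-antiderivative of $M$ as well, so $G(M)\subset G(\partial_c(\lambda g+(1-\lambda)h))$.

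For the restriction identity, the observation is just linearity on $S$: for every $s\in S$ we have $g(s)=h(s)=f(s)$, so
\[
(\lambda g+(1-\lambda)h)(s)=\lambda f(s)+(1-\lambda)f(s)=f(s),
\]
which is precisely the elementary remark recorded in the paragraph preceding the corollary. Combining these three facts yields $\lambda g+(1-\lambda)h\in\cA_{[c,f|_S,M]}$.

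There is no serious obstacle here: the corollary is essentially a repackaging of Proposition \ref{convcomb} together with the trivial linearity-on-$S$ observation, with the $c$-convexity of the combination explicitly postulated because it cannot be deduced from $c$-convexity of $g$ and $h$ in general.
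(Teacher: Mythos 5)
Your proof is correct and matches the paper's intended argument exactly: the paper derives the corollary by recalling Proposition \ref{convcomb} for the $c$-antiderivative property, together with the remark just before the preceding example that $g|_S=h|_S=f|_S$ implies $(\lambda g+(1-\lambda)h)|_S=f|_S$, while $c$-convexity is hypothesized. Nothing is missing.
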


\section{Cyclic Monotonicity and Minimal Antiderivatives}

We begin this section by recalling the following notions.

\begin{definition}[c-cyclic monotonicity and c-monotonicity]\label{cycmondef}
A mapping $M:X\rightrightarrows Y$ is said to be cyclically monotone of order $n$ with respect to $c$, $n$-$c$-monotone for short, when given any set of $n$ ordered pairs $\{(x_i,y_i)\}_{i=1}^n\subset G(M)$, if we set $x_{n+1}=x_1$, then
\begin{equation}\label{cycmonres}
0\leq\sum_{i=1}^n [c(x_i,y_i)-c(x_{i+1},y_i)].
\end{equation}
In this case we say that $G(M)$ is an $n$-$c$-monotone set. A mapping $M$ is said to be cyclically monotone with respect to $c$, $c$-cyclically monotone for short, if it is $n$-$c$-monotone for all $n\in\mathbb{N}$. A $2$-$c$-monotone mapping is simply called a $c$-monotone mapping. Explicitly, the mapping $M$ is $c$-monotone if for all $(x_1,y_1),(x_2,y_2)\in G(M)$, we have
\begin{equation}\label{mondef}
0\leq c(x_1,y_1)-c(x_1,y_2)-c(x_2,y_1)+c(x_2,y_2).
\end{equation}
The mapping $M$ is said to be maximal $n$-$c$-cyclically monotone if $G(M)$ has no proper $n$-$c$-cyclically monotone extension in $X\times Y$.
\end{definition}

We proceed by recalling Rockafellar's characterization of cyclically monotone
mappings. This characterization also holds in the setting of $c$-monotonicity and so does its
proof.

\begin{definition}[Rockafellar's antiderivative]
With the function $c$, the mapping $M:X\rightrightarrows Y$ and the point $s\in\dom (M)$, we associate Rockafellar's function $R_{[c,M,s]}:X\to\RX$, defined by
\begin{equation}
R_{[c,M,s]}(x):=
\sup_{\begin{array}{c}
                       n\in\mathbb{N},\\
                       x_1=s,\ x_{n+1}=x,\\
                       \{(x_i,y_i)\}_{i=1}^n\subset G(M)
                     \end{array} }
\ \ \sum_{i=1}^n [c(x_{i+1},y_i)-c(x_i,y_i)].
\end{equation}
\end{definition}

\begin{theorem}
A proper mapping $M:X\rightrightarrows Y$ is $c$-cyclically monotone if and only if it
has a proper $c$-antiderivative. In this case, in particular, for any $s\in\dom (M)$,
Rockafellar's function $R_{[c,M,s]}$ is a proper $c$-convex $c$-antiderivative
of $M$ which satisfies $R_{[c,M,s]}(s)=0$. In fact, $R_{[c,M,s]}$ is proper if and only if $M$ is proper and $c$-cyclically monotone.
\end{theorem}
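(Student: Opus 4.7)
The plan is to prove the theorem in three stages, following the template of Rockafellar's classical theorem adapted to the $c$-setting.

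First I would dispatch the easy implication. Suppose $M$ admits a proper $c$-antiderivative $f$. For any cycle $(x_i,y_i)_{i=1}^n\subset G(M)$ with $x_{n+1}=x_1$, the $c$-subdifferential inequality gives $f(x_i)+c(x_{i+1},y_i)\leq f(x_{i+1})+c(x_i,y_i)$ for each $i$; summing telescopes the $f$-terms to $0$ and yields \eqref{cycmonres}.

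Next I would analyse $R:=R_{[c,M,s]}$ under the assumption that $M$ is proper and $c$-cyclically monotone. The identity $R(s)=0$ comes from pairing the lower bound supplied by the trivial chain $n=1$, $x_1=x_2=s$, $y_1\in M(s)$ (whose contribution is $0$) with the upper bound $\sum[c(x_{i+1},y_i)-c(x_i,y_i)]\leq 0$ guaranteed by \eqref{cycmonres} for every competitor (each of which is a closed $c$-cycle starting and ending at $s$). For $c$-convexity, I would rewrite each admissible sum as $c(x,y_n)+C_{\mathbf{x},\mathbf{y}}$, where the constant $C_{\mathbf{x},\mathbf{y}}$ depends only on the chain and not on $x$; each such function is $c$-convex by the elementary facts in Section 2 (the function $c(\cdot,y_n)$ is $c$-convex and $c$-convexity is preserved under adding constants), so $R$ is an upper envelope of $c$-convex functions, and it is proper since $R(s)=0$, whence Proposition \ref{upenvconv} applies. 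For the $c$-antiderivative property, given $(x,y)\in G(M)$ and $x'\in X$, I would extend any chain competing for $R(x)$ by appending $(x_{n+1},y_{n+1}):=(x,y)$ and setting $x_{n+2}:=x'$; the new sum exceeds the old by exactly $c(x',y)-c(x,y)$, so passing to the supremum gives $R(x')\geq R(x)+c(x',y)-c(x,y)$, i.e.\ $y\in\partial_c R(x)$.

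Finally I would prove the propriety equivalence. The $(\Leftarrow)$ direction is already in hand via $R(s)=0$. For the converse, assume $M$ is proper but fails $c$-cyclic monotonicity and fix a cycle $(z_i,w_i)_{i=1}^k$ with $z_{k+1}=z_1$ and strictly positive defect $\delta:=\sum_{i=1}^k[c(z_{i+1},w_i)-c(z_i,w_i)]>0$. Given any $x\in X$, pick $y_0\in M(s)$ and build the chain that prepends the pair $(s,y_0)$ with $x_2=z_1$, iterates the bad cycle $m$ times at $z_1$, and finally appends the pair $(z_1,w_1)$ with terminal vertex $x$. The total sum equals $[c(z_1,y_0)-c(s,y_0)]+m\delta+[c(x,w_1)-c(z_1,w_1)]$, which tends to $+\infty$ with $m$, forcing $R(x)=+\infty$; since $x$ was arbitrary, $R$ is not proper.

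The main obstacle I expect is the splicing step of the last paragraph: one has to check that the concatenated object is an honest admissible chain (correct indexing, every $(x_i,y_i)$ really in $G(M)$, prescribed starting point $s$ and endpoint $x$) so that the blown-up sum genuinely lower-bounds $R(x)$ uniformly in $x$. Everything else amounts to a direct transcription of the classical Rockafellar argument using the envelope preliminaries of Section 2.
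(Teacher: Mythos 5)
Your proposal is correct and follows essentially the same route as the paper's proof: the telescoping argument for the easy direction, $R_{[c,M,s]}(s)=0$ obtained by pairing the trivial $n=1$ chain with the cyclic-monotonicity upper bound, $c$-convexity via Proposition \ref{upenvconv} applied to the elementary functions $c(\cdot,y_n)+\mathrm{const}$, the $c$-antiderivative property by appending $(x,y)$ to a competing chain, and non-properness by splicing a positive-defect cycle iterated $m$ times between a prefix from $s$ and a suffix ending at $x$ (your concatenated chain and its value $[c(z_1,y_0)-c(s,y_0)]+m\delta+[c(x,w_1)-c(z_1,w_1)]$ coincide with the paper's $(u_j,v_j)$ construction). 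The splicing step you flagged as the main obstacle checks out exactly as in the paper, so no gap remains.
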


\begin{proof}
Suppose first that $f:X\to\RX$ is a $c$-antiderivative of $M$, that is, $G(M)\subset G(\partial_c f)$. Let $\{(x_i,y_i)\}_{i=1}^n\subset G(M)$ and let $x_{n+1}=x_1$. For every $1\leq i\leq n$, we have
$$
c(x_{i+1},y_i)-c(x_i,y_i)\leq f(x_{i+1})-f(x_i).
$$
Summing up these inequalities over $i$, we see that $M$ is $c$-cyclically
monotone. Conversely, suppose that $M$ is proper and $c$-cyclically monotone and let $s\in\dom (M)$. For every $\{(x_i,y_i)\}_{i=1}^n\subset G(M)$ such that $x_1=s$, let $x_{n+1}=x_1=s$. Since $\sum_{i=1}^n [c(x_{i+1},y_i)-c(x_i,y_i)]\leq 0$, from the definition of Rockafellar's function we get $R_{[c,M,s]}(s)\leq 0$. Letting $n=1$ we obtain $R_{[c,M,s]}(s)=0$. Since $R_{[c,M,s]}$ is now seen to be proper, it is $c$-convex as a proper upper envelope of $c$-convex functions. Let $(x,y)\in G(M)$ and $x'\in X$. For every $\{(x_i,y_i)\}_{i=1}^n\subset G(M)$ such that $x_1=s$, from the definition of $R_{[c,M,s]}$ we have
$$
\sum_{i=1}^{n-1} [c(x_{i+1},y_i)-c(x_i,y_i)]+c(x,y_n)-c( x_n,y_n)+c(x',y)-c( x,y)\leq R_{[c,M,s]}(x'),
$$
which implies that
$$
R_{[c,M,s]}(x)+c(x',y)-c( x,y)\leq R_{[c,M,s]}(x').
$$
Since $x'$ was arbitrary, we get $(x,y)\in G(\partial_c R_{[c,M,s]})$ and consequently, that $R_{[c,M,s]}$ is a $c$-antiderivative of $M$. Finally, if $M$ is not
$c$-cyclically monotone, then there is a set $\{(x_i,y_i)\}_{i=1}^k\subset G(M)$ such that if we let $x_{k+1}=x_1$, then $q:=\sum_{i=1}^k [c(x_{i+1},y_i)-c(x_i,y_i)] > 0$. For each $m\in\mathbb{N}$ and $1\leq j\leq mk+1$, we define $(u_j,v_j)=(x_i,y_i)$ when $j\mod k=i$. Let $t\in M(s)$. For every $x\in X$, we have
\begin{align}
R_{[c,M,s]}(x)\geq &\ c(u_1,t)-c(s,t)+\sum_{j=1}^{mk}[c(u_{i+1},v_i)-c(u_i,v_i)]+c(x,v_{mk+1})-c(u_{mk+1},v_{mk+1})\nonumber\\
=&\ c(x_1,t)-c(s,t)+mq+c(x,y_1)-c(x_1,y_1)\underset{m\rightarrow\infty}{\longrightarrow} \infty,\nonumber
\end{align}
which means that $R_{[c,M,s]}$ is not proper in this case.
\end{proof}

We are now ready to employ Rockafellar's antiderivative in order to reestablish the nonemptiness of $\cA_{[c,f|_S,M]}$ by explicitly constructing the function $\alpha_{[c,f|_S,M]}$. Using duality relations from the previous section, this approach enables us to get explicit formulae for $\gamma_{[c,f|_S,M]}$ as well. This will be carried out, in particular, in the context of an application in the next section.

\begin{theorem}\label{mainformula}
Suppose that $f:X\to\RX$ is a $c$-antiderivative of the mapping $M:X\rightrightarrows Y$ and suppose that $\emptyset\neq S\subset\mathrm{dom} (M)$. Then the minimal $c$-antiderivative of $M$ that equals $f$ at the points of $S$, the function $\alpha_{[c,f|_S,M]}\in\cA_{[c,f|_S,M]}$, is given by
\begin{equation}\label{alpha}
\alpha_{[c,f|_S,M]}(x)=\ \sup_{s\in S}\ [f(s)+R_{[c,M,s]}(x)]\ \ \ \ \ \ \ \forall x\in X.
\end{equation}
\end{theorem}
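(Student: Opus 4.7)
The plan is to define $\alpha(x) := \sup_{s\in S}[f(s)+R_{[c,M,s]}(x)]$ and verify directly that $\alpha\in\cA_{[c,f|_S,M]}$ and that $\alpha$ is pointwise dominated by every element of $\cA_{[c,f|_S,M]}$; once both statements hold, $\alpha$ must coincide with $\alpha_{[c,f|_S,M]}$. The theorem from the previous section guarantees, for each $s\in S$, that $R_{[c,M,s]}$ is a proper $c$-convex $c$-antiderivative of $M$ with $R_{[c,M,s]}(s)=0$. Adding the constant $f(s)$ preserves both $c$-convexity and the property of being a $c$-antiderivative of $M$ (these are the two bullets recalled at the start of Section~2). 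Thus each summand in the supremum lies in $\cA_{[c,f|_S,M]}$, and Propositions~\ref{upenvconv} and \ref{upenvantider} will deliver $\alpha\in\cA_{[c,f|_S,M]}$ as soon as properness is established.

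The central step is therefore to show $\alpha|_S=f|_S$, which simultaneously yields properness and a crucial normalization. Fix $s_0\in S$. The lower bound is immediate: choosing $s=s_0$ in the supremum gives $\alpha(s_0)\geq f(s_0)+R_{[c,M,s_0]}(s_0)=f(s_0)$. For the upper bound I use the hypothesis that $f$ itself is a $c$-antiderivative of $M$: for any cycle $x_1=s,\ x_2,\ldots,x_n,\ x_{n+1}=s_0$ with $(x_i,y_i)\in G(M)$, the defining inequality of $\partial_c f$ yields $c(x_{i+1},y_i)-c(x_i,y_i)\leq f(x_{i+1})-f(x_i)$ for each $i$, and telescoping gives
\[
\sum_{i=1}^n [c(x_{i+1},y_i)-c(x_i,y_i)]\leq f(s_0)-f(s).
\]
Taking the supremum over all such chains shows $R_{[c,M,s]}(s_0)\leq f(s_0)-f(s)$, hence $f(s)+R_{[c,M,s]}(s_0)\leq f(s_0)$ for every $s\in S$, and therefore $\alpha(s_0)\leq f(s_0)$. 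With $\alpha|_S=f|_S$ in hand, $\alpha$ is proper, and the upper-envelope propositions finish the verification that $\alpha\in\cA_{[c,f|_S,M]}$.

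Minimality follows by the same telescoping argument applied to an arbitrary $h\in\cA_{[c,f|_S,M]}$. Given $s\in S$ and $x\in X$, for every chain $x_1=s,\ x_2,\ldots,x_n,\ x_{n+1}=x$ with $(x_i,y_i)\in G(M)$, the $c$-antiderivative property of $h$ yields $c(x_{i+1},y_i)-c(x_i,y_i)\leq h(x_{i+1})-h(x_i)$; summing and using $h(s)=f(s)$ gives $R_{[c,M,s]}(x)\leq h(x)-f(s)$, so $f(s)+R_{[c,M,s]}(x)\leq h(x)$. Taking the supremum over $s\in S$ yields $\alpha(x)\leq h(x)$ for all $x\in X$, as required.

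The only step that is not purely formal is the upper bound $\alpha|_S\leq f|_S$; this is where the hypothesis that $f$ is already a $c$-antiderivative of $M$ is genuinely used, and it is the natural analogue of Rockafellar's original computation $R_{[c,M,s]}(s)\leq 0$. Everything else is a routine application of the preservation of $c$-convexity and of the antiderivative property under addition of constants and under proper upper envelopes.
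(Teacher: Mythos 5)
Your proof is correct and takes essentially the same route as the paper's: both form $\beta_s=f(s)+R_{[c,M,s]}$, prove via the telescoping inequality that $R_{[c,M,s]}$ is the minimal $c$-antiderivative of $M$ vanishing at $s$ (your upper bound $\alpha|_S\leq f|_S$ is exactly the paper's estimate applied with $h=f$, and your last paragraph is the paper's inequality \eqref{minineq}), and both then apply Propositions \ref{upenvconv} and \ref{upenvantider} to the proper upper envelope. The one inaccurate side remark is that ``each summand in the supremum lies in $\cA_{[c,f|_S,M]}$'': when $S$ has more than one point, $\beta_s=f(s)+R_{[c,M,s]}$ agrees with $f$ at $s$ but is in general only $\leq f$ at the other points of $S$, so it need not belong to $\cA_{[c,f|_S,M]}$ --- this is harmless, however, since the envelope propositions require only that each $\beta_s$ be a proper $c$-convex $c$-antiderivative of $M$, and you establish $\alpha|_S=f|_S$ directly.
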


\begin{proof}
We define $\alpha:X\to\RX$ by
$$
\alpha(x)=\ \sup_{s\in S}\ [f(s)+R_{[M,s]}(x)],\ \ \ \ x\in X.
$$
First we consider the case where $S=\{s\}$ is a singleton. In order to prove this case we may assume that $f(s)=0$. In this case we have $\alpha=R_{[c,M,s]}$. Consequently, $\alpha$ is a proper $c$-convex $c$-antiderivative of $M$ and $\alpha|_S=f|_S=f(s)=0$, that is, $\alpha\in\cA_{[c,f|_S,M]}$. In order to see that $\alpha$ is the lower envelope of $\cA_{[c,f|_S,M]}$, we prove the following minimality property of $R_{[c,M,s]}$: it is the minimal $c$-antiderivative of $M$ that vanishes at $s$. That is, for $h:X\to\RX$, we claim that
\begin{equation}\label{rocminprop}
 G(M)\subset G(\partial_c h)\ \ \mathrm{and}\ \ \ h(s)=0\ \ \ \Rightarrow\ \ \ \ R_{[c,M,s]}\leq h.
\end{equation}\\
Indeed, let $x\in X$. For $n\in \mathbb{N},\ x_1=s$ and $\{(x_1,y_1),\dots,(x_n,y_n)\}\subset G(M)\subset G(\partial_c h)$, we have
\begin{align}\label{minineq}
     &\sum_{i=1}^{n-1}[c(x_{i+1},y_i)-c(x_i,y_i)]+\ c(x,y_n)-c(x_n,y_n) \\
\leq &\sum_{i=1}^{n-1}[h(x_{i+1})-h(x_i)]+h(x)-h(x_n)\nonumber\\
   = &\ h(x)-h(x_1)=h(x)-h(s)=h(x), \nonumber
\end{align}
which implies that $R_{[c,M,s]}(x)\leq h(x)$, as claimed.

In the general case, for every $s\in S$, we define $\beta_s:X\to\RX$ by
$$
\beta_s(x)=f(s)+R_{[c,M,s]}(x).
$$
Let $h:X\to\RX$ be a function such that $G(M)\subset G(\partial_c h)$ and $h|_S=f|_S$. Then for every $s\in S$, we have
$$
\begin{array}{cc}
  (a)\ \ \beta_s\ \mathrm{is}\ c-\mathrm{convex}; & (b)\ \ G(M)\subset G(\partial_c \beta_s);\\
  \\
  (c)\ \ \beta_s\leq h;\ \ \ \ \ \ \ \ \ \ \ \ & \ \ \ \ (d)\ \ \beta_s(s)=h(s)=f(s).
\end{array}
$$
Parts $(a)$ and $(b)$ are clear because the addition of a constant does not affect the $c$-subdifferential and $c$-convexity. Since $R_{[c,M,s]}(s)=0$, $(d)$ is also clear. Part $(c)$ is a straightforward application of our particular case above, that is, $\beta_s(\cdot)=f(s)+R_{[c,M,s]}(\cdot)$ is the minimal $c$-antiderivative of $M$ that equals $f$ at $s$. Finally, from $(a)$ and $(b)$ it follows that $\{\beta_s\}$ is a family of $c$-convex $c$-antiderivatives of $M$. Part $(c)$ implies that the upper envelope $\alpha=\sup_s \beta_s$ is proper. Applying Propositions \ref{upenvconv} and \ref{upenvantider}, we conclude that $\alpha$ is a  $c$-convex $c$-antiderivative of $M$. From $(c)$ we had $\alpha\leq h$, now $(d)$ implies that $\alpha|_S=f|_S$. We conclude that $\alpha\in\cA_{[c,f|_S,M]}$ and satisfies $\alpha\leq h$. Since $h$ was an arbitrary $c$-antiderivative of $M$ such that $h|_S=f|_S$, we conclude that $\alpha=\alpha_{[c,f|_S,M]}$, as asserted.

\end{proof}

\begin{remark}
If $S=\dom (M)$, then formula (\ref{alpha}) reduces to formula (\ref{conjufulldom}) as follows: let $x\in X$. For $n\in \mathbb{N},\ x_1=s$ and $\{(x_1,y_1),\dots,(x_n,y_n)\}\subset G(M)\subset G(\partial_c f)$, we have seen in \eqref{minineq} that
$$
\sum_{i=1}^{n-1}[c(x_{i+1},y_i)-c(x_i,y_i)]+\ c(x,y_n)-c(x_n,y_n)\leq f(x_n)-f(s)+c(x,y_n)-c(x_n,y_n).
$$
Therefore
$$
\sup_{(s,t)\in M}\ [f(s)+c(x,t)-c(s,t)]\leq \sup_{s\in S}\ [f(s)+R_{[c,M,s]}(x)]\leq\sup_{(x_n,y_n)\in M}\ [f(x_n)+c(x,y_n)-c(x_n,y_n)],
$$
that is, we have equality in both inequalities.
\end{remark}

\section{Optimal Antiderivatives as Constrained Optimal Lipschitz Extensions }

In our discussion of Lipschitz functions we will assume that the functions are $1$-Lipschitz. Since $Kd^\alpha$, where $0<\alpha\leq 1$ and $K>0$, is also a metric whenever $d$ is, our results and formulae are also easily extensible to $\alpha$-H\"{o}lder continuous functions with constant $K$ by replacing $d$ with $Kd^\alpha$. First we discuss the Lipschitz extension problem without introducing abstract convexity into the discussion. We begin with the following envelope properties of sets of Lipschitz functions.

\begin{proposition}\label{lipenv}
Let $(X,d)$ be a metric space. Let $M:X\rightrightarrows X$ and suppose that $\{h_s\}$ is a family of functions such that $h_s:X\to\RR$ is $1$-Lipschitz and 
\begin{equation}\label{cond1}
h_s(y)-h_s(x)=d(x,y)\ \ \ \ \ \ \ \ \ \ \ \ \forall(x,y)\in G(M)
\end{equation}
for every $s$. Then $\sup_s h_s$ and $\inf_s h_s$ are $1$-Lipschitz functions which also satisfy (\ref{cond1}), when proper. In particular, if $\{h_s\}$ is any family of $1$-Lipschitz functions, then $\sup_s h_s$ and $\inf_s h_s$ are $1$-Lipschitz functions, when proper.
\end{proposition}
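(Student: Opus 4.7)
The plan is to treat the upper envelope $f:=\sup_s h_s$ in detail, the argument for $\inf_s h_s$ being essentially identical (or obtainable by replacing each $h_s$ with $-h_s$ and noting that \eqref{cond1} is preserved under this substitution up to the sign of $d(x,y)$, which then reappears after taking $-\inf = \sup(-\cdot)$). First I would establish the $1$-Lipschitz property of $f$ when proper. For any $x,y\in X$ and any index $s$, the $1$-Lipschitz property of $h_s$ gives $h_s(x)\leq h_s(y)+d(x,y)\leq f(y)+d(x,y)$; taking the supremum over $s$ on the left yields $f(x)\leq f(y)+d(x,y)$, and exchanging the roles of $x$ and $y$ yields $|f(x)-f(y)|\leq d(x,y)$.

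Next I would verify that $f$ inherits \eqref{cond1}. Fix $(x,y)\in G(M)$. By hypothesis, $h_s(y)=h_s(x)+d(x,y)$ for every $s$. Since adding a constant commutes with the supremum, $f(y)=\sup_s h_s(y)=\sup_s h_s(x)+d(x,y)=f(x)+d(x,y)$, as required. The ``in particular'' assertion at the end of the statement follows by applying the first part to the trivial mapping $M$ with $G(M)=\emptyset$, in which case \eqref{cond1} is vacuous and only the $1$-Lipschitz argument is needed.

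The only point that merits attention is the propriety hypothesis, and I do not expect it to be a genuine obstacle. If $f(x_0)<\infty$ at some $x_0\in X$, then the Lipschitz estimate above shows that $f(y)\leq f(x_0)+d(x_0,y)<\infty$ for every $y\in X$, so ``proper'' here amounts precisely to $f\not\equiv+\infty$ and is automatically inherited from finiteness at a single point. The analogous remark, with $-\infty$ in place of $+\infty$, handles the lower envelope. Thus the whole proposition reduces to the two elementary manipulations above, both of which rely only on the fact that translation commutes with $\sup$ and $\inf$.
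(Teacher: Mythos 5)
Your proof is correct, but it deliberately takes the route the paper avoids: the paper explicitly says it ``omit[s] the direct proof'' --- which is essentially what you wrote --- and instead proves Proposition \ref{lipenv} later as a corollary of its abstract-convexity machinery. There, Proposition \ref{lipconv} identifies the $1$-Lipschitz functions with the $-d$-convex functions (equivalently, the $-d$-antiderivatives of the identity) and shows that $G(\partial_{-d}h)$ consists exactly of the distance-preserving pairs $h(y)-h(x)=d(x,y)$, so each $h_s$ satisfying \eqref{cond1} is a $-d$-convex $-d$-antiderivative of $M$; the general envelope results, Propositions \ref{upenvconv} and \ref{upenvantider}, then give that $\sup_s h_s$, when proper, is again such an antiderivative, and the lower envelope is handled by passing to $\sup_s(-h_s)$ --- your second suggestion. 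Your direct computation buys elementarity and self-containedness: it uses only the triangle inequality, the symmetry of $d$, and the fact that translation by a constant commutes with $\sup$ and $\inf$, and it makes explicit (as the paper does not) why ``proper'' is harmless, namely that finiteness at a single point propagates everywhere through the Lipschitz estimate. What the paper's proof buys is the point of its Section 5: the proposition is exhibited as a literal instance of the $c$-convexity envelope theorems with $c=-d$, the same machinery that subsequently produces the explicit optimal extension formulae. Two minor remarks on your write-up: in the reduction $h_s\mapsto -h_s$, the family $\{-h_s\}$ satisfies \eqref{cond1} with respect to $M^{-1}$ rather than $M$ (the paper's one-line reduction glosses over the same point), so it is good that you also observe the inf case admits the identical direct computation, since $\inf_s\,[h_s(x)+d(x,y)]=\inf_s h_s(x)+d(x,y)$; and for the ``in particular'' clause you take $G(M)=\emptyset$ where the paper takes $M=I$ --- both are valid, \eqref{cond1} being vacuous in the first case and automatic in the second.
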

We omit the direct proof. We will offer a proof in the setting of $-d$-convexity shortly. We note in passing  that the particular case will follow from the general case when we let $M=I$, where $I:X\to X$ is the identity on $X$. Next, we recall the following well-known extension theorem of McShane \cite{mcs} and Whitney \cite{whi}.

\begin{theorem}
Let $S$ be a nonempy subset of a metric space $(X,d)$ and let $f:S\to\mathbb{R}$ be $1$-Lipschitz. Then $f$ extends to a $1$-Lipschitz function which is defined on all of $X$. In particular, the functions
$$
\alpha(x)=\sup_{s\in S}\ [f(s)-d(x,s)]\ \ \ \ \ \ \ \ \ \mathrm{and}\ \ \ \ \ \ \ \ \ \gamma(x)=\inf_{s\in S}\ [f(s)+d(x,s)]
$$
are $1$-Lipschitz extensions of $f$. If $h:X\to\mathbb{R}$ is $1$-Lipschitz and $h|_S=f$, then $\alpha\leq h\leq\gamma$.
\end{theorem}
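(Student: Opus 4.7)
The plan is to verify four claims in turn: that $\alpha$ and $\gamma$ are real-valued, that they are $1$-Lipschitz, that they restrict to $f$ on $S$, and that any $1$-Lipschitz extension $h$ of $f$ is sandwiched as $\alpha \leq h \leq \gamma$. All four amount to straightforward bookkeeping with the triangle inequality, together with the envelope principle for $1$-Lipschitz functions recorded in Proposition \ref{lipenv}.

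First, fix $s \in S$ and note that $\phi_s(x) := f(s) - d(x,s)$ is $1$-Lipschitz on $X$ by the reverse triangle inequality. Hence $\alpha = \sup_{s \in S} \phi_s$ is $1$-Lipschitz by the second (unconstrained) part of Proposition \ref{lipenv}, as soon as it is shown to be proper. For properness, fix any $s_0 \in S$. The bound $\alpha(x) \geq \phi_{s_0}(x) > -\infty$ is automatic, while the $1$-Lipschitz property of $f$ on $S$ yields $f(s) \leq f(s_0) + d(s_0,s)$, and then
\[
\phi_s(x) = f(s) - d(x,s) \leq f(s_0) + d(s_0,s) - d(x,s) \leq f(s_0) + d(x,s_0)
\]
by the triangle inequality; taking supremum over $s$ gives $\alpha(x) \leq f(s_0) + d(x,s_0) < \infty$. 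The same two estimates pin down $\alpha$ on $S$: for $s \in S$, choosing $s' = s$ in the supremum gives $\alpha(s) \geq f(s)$, while the bound $f(s') - d(s,s') \leq f(s)$ for every $s' \in S$ yields the reverse. The corresponding statements for $\gamma$ follow either by the symmetric argument or by observing that replacing $f$ by $-f$ in the definition of $\alpha$ produces $-\gamma$, so the properties just proved for $\alpha$ transfer to $\gamma$.

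For the sandwich, let $h : X \to \RR$ be any $1$-Lipschitz extension of $f$. For each $s \in S$ and $x \in X$, the estimate $|h(x) - f(s)| = |h(x) - h(s)| \leq d(x,s)$ rearranges to $f(s) - d(x,s) \leq h(x) \leq f(s) + d(x,s)$; passing to the supremum on the left and the infimum on the right over $s \in S$ gives $\alpha(x) \leq h(x) \leq \gamma(x)$. The only delicate point in the whole argument is ensuring that the two candidate extensions are finite-valued, and this is precisely where the $1$-Lipschitz hypothesis on $f|_S$ is used. As anticipated in the introductory discussion, this classical result will reappear as the special case of the constrained extension theorems of this section corresponding to $M$ being the identity mapping on $X$.
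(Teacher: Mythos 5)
Your proof is correct and follows essentially the same route as the paper's: the triangle-inequality sandwich $f(s)-d(x,s)\leq h(x)\leq f(s)+d(x,s)$ for any extension $h$, the verification that $\alpha|_S=\gamma|_S=f|_S$ via the $1$-Lipschitz hypothesis on $f$, and an appeal to Proposition \ref{lipenv} for the $1$-Lipschitzness of the proper envelopes. Your only additions are cosmetic: an explicit global bound $\alpha(x)\leq f(s_0)+d(x,s_0)$ for properness (the paper deduces properness just from finiteness on $S$) and the symmetry $\gamma=-\sup_{s\in S}[-f(s)-d(\cdot,s)]$ to transfer the argument from $\alpha$ to $\gamma$.
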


\begin{proof}
If $h$ is a $1$-Lipschitz extension of $f$, then for $x\in X$ and every $s\in S$, we have
$$
f(s)-d(x,s)=h(s)-d(x,s)\ \leq\ h(x)\ \leq\ h(s)+d(x,s)=f(s)+d(x,s).
$$
It follows that $\alpha\leq h\leq\gamma$. Since $f$ is $1$-Lipschitz on $S$, we have $\alpha|_S=\gamma|_S=f|_S$. In particular, $\alpha$ and $\gamma$ are proper. Now Proposition \ref{lipenv} guarantees that $\alpha$ and $\gamma$ are $1$-Lipschitz.
\end{proof}

These facts suffice to establish the existence of optimal extensions for the following problem.

\begin{theorem}\label{lipext1}
Let $(X,d)$ be a metric space. Let $f:A\subset X\to\RR$ be a $1$-Lipschitz function and let $M:A\rightrightarrows A$ be a mapping such that
\begin{equation}\label{cond2}
f(y)-f(x)=d(x,y)\ \ \ \ \ \ \mathrm{for\ all}\ \ (x,y)\in G(M).
\end{equation}
Given $\emptyset\neq S\subset\dom (M)$, $f|_S$ extends to a $1$-Lipschitz function which is defined on all of $X$ and which satisfies (\ref{cond2}). In particular, there exist minimal and maximal $1$-Lipschitz extensions of $f|_S$ which are defined on $X$ and satisfy (\ref{cond2}).
\end{theorem}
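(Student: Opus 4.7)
The plan is to recognize this theorem as the instance of Theorem~\ref{main} obtained by taking $c=-d$ and $Y=X$. As noted in the introduction, the proper $-d$-convex functions are exactly the (necessarily finite-valued) $1$-Lipschitz functions on $X$; hence the $1$-Lipschitz extensions of $f|_S$ satisfying \eqref{cond2} will correspond precisely to the elements of $\cA_{[-d,\,f|_S,\,M]}$, and the sought minimal and maximal extensions will be $\alpha_{[-d,\,f|_S,\,M]}$ and $\gamma_{[-d,\,f|_S,\,M]}$ delivered by Theorem~\ref{main}.

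The first step is to translate between \eqref{cond2} and the $-d$-subdifferential relation. For a $1$-Lipschitz $h$ on $X$ and $(x,y)\in G(M)$, the condition $y\in\partial_{-d}h(x)$ reads $h(x)-d(x',y)\le h(x')-d(x,y)$ for every $x'\in X$. Substituting $x'=y$ yields $h(y)-h(x)\ge d(x,y)$, which combined with the $1$-Lipschitz bound gives \eqref{cond2}. Conversely, if $h$ is $1$-Lipschitz on $X$ and $h(y)-h(x)=d(x,y)$, then for every $x'\in X$, using $h(y)-h(x')\le d(x',y)$, one has $h(x)-d(x',y)=h(y)-d(x,y)-d(x',y)\le h(x')-d(x,y)$. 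Thus the required extensions are precisely the $-d$-convex $-d$-antiderivatives of $M$ that agree with $f$ on $S$.

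The second step is to extend $f$ to a proper function $f:X\to\RX$ by setting $f(x)=+\infty$ for $x\notin A$. This extension remains a $-d$-antiderivative of $M$: for $(x,y)\in G(M)\subset A\times A$ and $x'\in A$ the inequality $f(x)-d(x',y)\le f(x')-d(x,y)$ holds by the first step, and for $x'\notin A$ it is trivial since the right-hand side is $+\infty$.

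The final step is to apply Theorem~\ref{main} directly with $c=-d$, yielding that $\cA_{[-d,\,f|_S,\,M]}$ is nonempty and contains both its lower envelope $\alpha_{[-d,\,f|_S,\,M]}$ and its upper envelope $\gamma_{[-d,\,f|_S,\,M]}$. By the first step, these are the desired minimal and maximal $1$-Lipschitz extensions of $f|_S$ satisfying \eqref{cond2}, which proves the theorem. No real obstacle is anticipated; the only substantive verification is the equivalence in the first step, and the rest is a direct translation of the framework developed in Sections~2 and~3.
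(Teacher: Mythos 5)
Your proof is correct, but it is not the route the paper takes for this particular theorem. The paper proves Theorem~\ref{lipext1} deliberately \emph{without} abstract convexity: it invokes the McShane--Whitney theorem (stated and proved just before, itself via Proposition~\ref{lipenv}) to get a $1$-Lipschitz extension of the \emph{full} function $f$ to $X$, observes that any such extension automatically extends $f|_S$ and satisfies \eqref{cond2} (since that condition only constrains values on $A$, where the extension agrees with $f$), so the admissible set is nonempty with proper envelopes, and then applies the envelope-stability result Proposition~\ref{lipenv} to conclude that the lower and upper envelopes belong to the set. You instead translate the problem into the framework of Sections~2 and~3: your identification of \eqref{cond2} with $G(M)\subset G(\partial_{-d}h)$ for $1$-Lipschitz $h$ is exactly the final assertion of Proposition~\ref{lipconv} (which you reverify by direct computation), extending $f$ by $+\infty$ off $A$ makes it a proper $-d$-antiderivative of $M$, and Theorem~\ref{main} then delivers nonemptiness of $\cA_{[-d,f|_S,M]}$ together with membership of both envelopes. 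This is precisely the strategy the paper reserves for Theorem~\ref{Lipext}, whose hypotheses the paper explicitly notes cover those of Theorem~\ref{lipext1}; so your argument is in effect the paper's second, abstract-convexity proof, applied one theorem early. The trade-off is clear: the paper's proof of Theorem~\ref{lipext1} is shorter and elementary but purely existential and rests on McShane--Whitney, whereas yours is independent of McShane--Whitney (recovering it as the special case $M=I_S$, in line with the paper's stated aim) and, if pushed through Theorem~\ref{mainformula} and \eqref{conjufulldom}, yields explicit formulae for the optimal extensions, at the cost of invoking the $c$-transform duality machinery behind Theorem~\ref{main}. All steps in your proposal check out; the only cosmetic slips are attributing the Lipschitz/$-d$-convexity equivalence to the introduction rather than to Proposition~\ref{lipconv}, and applying your first-step computation on $A$ rather than $X$ in the second step, where the identical estimate is what is actually needed.
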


\begin{proof}
Since $f$ extends to a $1$-Lipschitz function which is defined on $X$, this extension clearly extends $f|_S$ and satisfies (\ref{cond2}). Thus, the set of $1$-Lipschitz extensions of $f|_S$ which satisfy (\ref{cond2}) is not empty and has proper lower and upper envelopes. From Proposition \ref{lipenv} it follows that these envelopes belong to the set.
\end{proof}

Now we introduce abstract convexity into our discussion in order to recover all the above facts from a different perspective. This will allow us to construct the optimal extensions explicitly. We do not assume the McShane and Whitney extension theorem; it follows as a particular case. When $(X,d)$ is a metric space, $Y=X$ and we take the function $c:X\times Y\to\RR$ to be $c=-d$, the following relations between $c$-convex functions and $1$-Lipschitz functions emerge.

\begin{proposition}\label{lipconv}
Let $(X,d)$ be a metric space. For any proper function $f:X\to\RX$, the following assertions are equivalent:\\
\\
1. $f$ is $1$-Lipschitz;\\
2. $f^{-d}=-f$;\\
3. $f$ is $-d$-convex;\\
4. $f$ is a $-d$-antiderivative of the identity $I:X\to X$. That is, $G(I)\subset G(\partial_{-d} f)$.\\
\\
In this case, it follows that $f:X\to\RR$. The graph of the $-d$-subdifferential of $f$ is the set of ordered pairs $(x,y)\in X\times X$ such that $f$ preserves the distance $d(x,y)$ in the sense that $f(y)-f(x)=d(x,y)$.
\end{proposition}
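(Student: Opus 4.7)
The plan is to prove the cycle $1 \Rightarrow 2 \Rightarrow 3 \Rightarrow 1$, together with the equivalence $1 \Leftrightarrow 4$, and then read off the description of $G(\partial_{-d} f)$ from statement $2$.

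First I would observe that for any proper $f$,
$$f^{-d}(y) = \sup_{x\in X}\,[-d(x,y) - f(x)] = -\inf_{x\in X}\,[f(x) + d(x,y)].$$
For $1 \Rightarrow 2$, if $f$ is $1$-Lipschitz, then $f(x) + d(x,y) \geq f(y)$ for all $x$, with equality at $x=y$, so the infimum equals $f(y)$, giving $f^{-d}(y) = -f(y)$. (Note that properness of $f$ together with $1$-Lipschitz forces $f:X\to\RR$: picking any $x_0 \in \dom(f)$, the bound $f(x) \leq f(x_0) + d(x,x_0)$ keeps $f$ finite everywhere.) For $2 \Rightarrow 3$, since $-f : X \to \RX$ is proper (as $f$ is real-valued), the identity $f = -(-f) = (-f)^{-d}$ (obtained by applying the $-d$-transform to $f^{-d}=-f$ and reusing the same computation) exhibits $f$ as a $-d$-transform, so $f \in \Gamma_{-d}(X)$.

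For $3 \Rightarrow 1$, write $f = g^{-d}$ for some proper $g:X\to\RX$. For any $x,y \in X$ and any $z\in X$, the triangle inequality $d(z,y)\leq d(z,x) + d(x,y)$ gives
$$-d(z,y) - g(z) \geq -d(z,x) - g(z) - d(x,y).$$
Taking the supremum over $z$ yields $f(y) \geq f(x) - d(x,y)$. Swapping the roles of $x$ and $y$ produces the reverse inequality, hence $|f(x) - f(y)| \leq d(x,y)$.

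For $1 \Leftrightarrow 4$, unfold the definition: $x \in \partial_{-d} f(x)$ means that for every $x' \in X$,
$$f(x) - d(x',x) \leq f(x') - d(x,x) = f(x'),$$
which is exactly $f(x) - f(x') \leq d(x,x')$. Requiring this for all $x \in X$ (so that $G(I) \subset G(\partial_{-d} f)$) and exchanging the roles of $x$ and $x'$ is equivalent to $f$ being $1$-Lipschitz. Finally, for the characterization of the $-d$-subdifferential graph, by the second description in Definition~\ref{ctrans} we have $y \in \partial_{-d} f(x)$ iff $f(x) + f^{-d}(y) = -d(x,y)$, which by item $2$ becomes $f(x) - f(y) = -d(x,y)$, that is $f(y) - f(x) = d(x,y)$. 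No single step here is delicate; the only point to watch is handling the possible values $\pm\infty$ and the properness of $-f$, which is why the real-valuedness observation above should be recorded at the outset.
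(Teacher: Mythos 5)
Your proof is correct, but it follows a genuinely different decomposition from the paper's. The paper proves the single cycle $1\Rightarrow 2\Rightarrow 3\Rightarrow 4\Rightarrow 1$: in particular, to get from $-d$-convexity back to the Lipschitz property it goes through statement 4, first checking that each elementary function $-d(\cdot,y)$ is a $-d$-antiderivative of the identity and then invoking the envelope result (Proposition \ref{upenvantider}) to conclude that any $-d$-convex function is such an antiderivative. You instead close the loop $1\Rightarrow 2\Rightarrow 3\Rightarrow 1$ directly, with a bare-hands triangle-inequality estimate on the supremum defining $g^{-d}$ for $3\Rightarrow 1$, and you attach statement 4 by simply unfolding Definition \ref{ctrans}, making $1\Leftrightarrow 4$ a one-line computation. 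This buys a more self-contained argument that never needs the envelope machinery, whereas the paper's route showcases the abstract-convexity tools it reuses throughout (and indeed the paper later re-proves Proposition \ref{lipenv} from exactly those tools). You also explicitly derive the final description of $G(\partial_{-d}f)$ from the equality form $f(x)+f^{-d}(y)=-d(x,y)$ together with statement 2, a point the paper's proof leaves implicit. One step you should spell out in $2\Rightarrow 3$: to ``reuse the same computation'' on $-f$ and obtain $(-f)^{-d}=f$, you need $-f$ to be $1$-Lipschitz, which tacitly uses $2\Rightarrow 1$; this is harmless and immediate, since $f^{-d}\leq -f$ already says $f(y)-f(x)\leq d(x,y)$ for all $x,y$, whence by symmetry of $d$ the function $f$ (and so $-f$) is $1$-Lipschitz and, being proper, real-valued --- but as written the parenthetical hides this small forward reference, and it is worth recording so the chain $1\Rightarrow 2\Rightarrow 3$ is visibly non-circular.
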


\begin{proof}
If $f$ is $1$-Lipschitz, then for every $y\in X$ we have $f^{-d}(y)=\sup_{x\in X}\ -d(x,y)-f(x)\leq -f(y)$. Since equality holds for $x=y$, we have verified that $1\Rightarrow 2$. If $f^{-d}=-f$, then for every $x\in X$,
$$
f(x)\geq (f^{-d})^{-d}(x)=\sup_{y\in X}[-d(x,y)-f^{-d}(y)]=\sup_{y\in X}[-d(x,y)+f(y)]\geq f(x),
$$
which implies that $f=(f^{-d})^{-d}$ and consequently that $f$ is $-d$-convex. This verifies $2\Rightarrow 3$. For every $y\in X$, the function $-d(\cdot,y)$ is a $-d$-antiderivative of the identity $I$. Indeed, let $x\in X$. For any $x'\in X$ we have
$$
-d(\cdot,y)(x)-d(x',x)\leq -d(\cdot,y)(x')=-d(\cdot,y)(x')-d(x,x),
$$
which implies that $x\in\partial_{-d}(-d(\cdot,y))(x)$. Since this holds for every $x\in X$, we obtain $G(I)\subset G(\partial_{-d}(-d(\cdot,y)))$. It follows that $-d(\cdot,y)+const$ is also a $-d$-antiderivative of the identity $I$ for any constant. Now suppose that $f$ is $-d$-convex, say $f=g^{-d}=\sup_{y\in X}\ -d(\cdot,y)+g(y)$. Then $f$ is the proper upper envelope of a family of $-d$-antiderivatives of $I$. It now follows from Proposition \ref{upenvantider} that $f$ is also a $-d$-antiderivative of $I$, which concludes the proof of $3\Rightarrow 4$. If $f$ is a $-d$-antiderivative of $I$, then for every $x,x'\in X$, we have $x\in\partial_{-d} f(x)$ and consequently we obtain
$$
f(x)-f(x')\leq d(x,x')-d(x,x)=d(x,x'),
$$
that is, $4\Rightarrow 1$.
\end{proof}

\begin{remark}
Since we have not used the fact that $d(x,y)=0$ only if $x=y$, Proposition \ref{lipconv} also holds for a pseudometric $d$. The fact that the identity mapping is $-d$-cyclically monotone is an immediate consequence of the properties of the metric $d$.
\end{remark}

It follows that the identity mapping is the most trivial $-d$-subdifferential of $-d$-convex functions. It is another example of a situation which is impossible in classical convex analysis: the $c$-subdifferential of a $c$-convex function need not be maximal $c$-cyclically monotone or maximal $c$-monotone. Furthermore, it does not determine its $c$-antiderivative up to an additive constant. In the classical setting, where proper, convex and lower semicontinuous functions are precisely the $c$-convex functions, when we take $c$ to be the pairing between a Banach space and its dual, these are well known facts due to Rockafellar \cite{roc,roc2}. That is, the subdifferential of a proper, convex and lower semicontinuous function determines its antiderivative up to an additive constant. It is maximal monotone and maximal cyclically monotone. This is illustrated by the following example which extends Example 2.3 from \cite{rol}.

\begin{example}
The identity mapping $I$ on a metric space is $-d$-cyclically monotone. $G(I)$ is contained in the graph of the $-d$-subdifferential of any $-d$-convex function. As a consequence, any $-d$-cyclically monotone set $\ G(M)\subset X\times X$ extends to the $-d$-cyclically monotone set $G(M)\cup G(I)$. In particular, $G(I)$ is extensible by one point to a $-d$-cyclically monotone set, using any point in $X\times X$. Therefore, if $X$ is not a singleton, $I$ is not maximal $-d$-cyclically monotone and it is not maximal $-d$-monotone. However, it is the (whole) $-d$-subdifferential of any function $f:X\to\RR$ such that $f(y)-f(x)<d(x,y)$ for all $x\neq y$. In particular, it is the $-d$-subdifferential of any $K$-Lipschitz function, where $0\leq K<1$. Therefore, $I$ does not determine its $-d$-convex $-d$-antiderivative up to an additive constant.
\end{example}

\begin{proof}
The fact that $I$ is $-d$-cyclically monotone and the fact that $G(I)$ is a subset of $G(\partial_{-d}f)$ for any $-d$-covex function $f$ are clear from Proposition \ref{lipconv}. If $G(M)$ is a $-d$-cyclically monotone set, then $M$ admits a $-d$-convex $-d$-antiderivative $f$. It follows that $G(M)\cup G(I)\ \subset\ G(\partial_{-d}f)$, which is cyclically monotone. Since a singleton in $X\times X$ is always $-d$-cyclically monotone, $G(I)$ extends to a $-d$-cyclically monotone set by unifying it with any singleton. If $f:X\to\RR$ is such that $f(y)-f(x)<d(x,y)$ for all $x\neq y$, then $f$ is $-d$-convex. In this case,
$$
G(\partial_{-d}f)=\{(x,y)\in X\times X\ |f(y)-f(x)=d(x,y)\}=G(I).
$$
\end{proof}

Having established Proposition \ref{lipconv}, we now show that the envelope properties of Lipschitz functions in Proposition \ref{lipenv} are a direct consequence of this proposition, when we use $c$-convexity properties of envelopes:

\begin{proof}(of Proposition \ref{lipenv})
Let $G(M)\subset X\times X$ be a set and suppose that $\{h_s\}$ is a set of $1$-Lipschitz functions such that for every $s$,
$$
h_s(y)-h_s(x)=d(x,y)\ \ \ \ \ \ \ \ \ \ \ \ \forall(x,y)\in G(M).
$$
Then according to Proposition \ref{lipconv}, the function $h_s$ is a $-d$-convex $-d$-antiderivative of $M$ for every $s$. From Propositions \ref{upenvconv} and \ref{upenvantider} it follows that when $\gamma:=\sup_s h_s$ is proper, it is also a $-d$-convex $-d$-antiderivative of $M$, that is, $\gamma$ is $1$-Lipschitz and satisfies $\gamma(y)-\gamma(x)=d(x,y)$ for every $(x,y)\in G(M)$. In order to see the lower envelope properties we apply the same argument with the upper envelope of the $1$-Lipschitz functions $-h_s$.
\end{proof}

Next, we turn to the main result of this section. We reformulate the hypotheses of Theorem \ref{lipext1}. Both in Theorem \ref{lipext1} and in Theorem \ref{Lipext}, the hypotheses provide us with a $-d$ antiderivative $f$ of the mapping $M:X\rightrightarrows X$ and a nonempty subset $S$ of $\dom (M)$. Therefore the existence of the optimal extensions of $f|_S$ which are $-d$-convex $-d$-antiderivatives of $M$ and their formulae hold in both cases. Unlike in Theorem \ref{lipext1}, in Theorem \ref{Lipext} the reformulation does not require $f$ to be $1$-Lipschitz outside $\dom (M)$. However, it is clear that any $1$-Lipschitz extension of $f|_S$ that satisfies these hypotheses is also uniquely determined on $M(S)$ by the equality
\begin{align}
f(t)-f(s)=d(s,t)\ \ \ \  \mathrm{for\ every}\ s\in S\ \ \mathrm{and}\ \ t\in M(s).\nonumber\\
\nonumber
\end{align}

\begin{theorem}\label{Lipext}
Let $(X,d)$ be a metric space. Let $M:\dom(M)\subset X\rightrightarrows X$ and $f:\dom(M)\to\RR$ satisfy
\begin{equation}\label{dsubM}
f(x)-f(x')\leq d(x',y)-d(x,y)\ \ \ \ for\ all\ \ (x,y)\in G(M)\ \ and\ \ x'\in\dom(M).
\end{equation}
Let $\emptyset\neq S\subset\dom(M)$. Then $f|_S$ extends to a $1$-Lipschitz function $h:X\to\RR$ which satisfies
\begin{equation}\label{dsub}
h(x)-h(x')\leq d(x',y)-d(x,y)\ \ \ \ for\ all\ \ (x,y)\in G(M)\ \ and\ \ x'\in X.
\end{equation}
The set $\cA_{[-d,f|_S,M]}$ of all such extensions $h$ of $f|_S$ is convex. In particular, the function $\alpha_{[-d,f|_S,M]}:X\to\RR$ defined by
\begin{equation}\label{minfSM}
\alpha_{[-d,f|_S,M]}(x)=\sup_{\begin{array}{c}
                       s\in S,\\
                       n\in\mathbb{N},\ x_1=s,\\
                       \{(x_i,y_i)\}_{i=1}^n\subset G(M)\\
                     \end{array} }
f(s)+\sum_{i=1}^{n-1}[ d(x_i,y_i)-d(x_{i+1},y_i)]+d( x_n,y_n)-d(x,y_n)
\end{equation}
is the minimal $1$-Lipschitz function that agrees with $f$ on $S$ and satisfies \eqref{dsub}. The function $\gamma_{[-d,f|_S,M]}:X\to\RR$ defined by
\begin{equation}\label{maxfSM}
\gamma_{[-d,f|_S,M]}(x)=\inf_{\begin{array}{c}
                       s\in S,\\
                       n\in\mathbb{N},\ x_1=s,\\
                       \{(x_i,y_i)\}_{i=1}^n\subset G(M)\\
                     \end{array} }
f(s)+\sum_{i=1}^{n-1} [d(x_i,y_{i+1})-d(x_{i+1},y_{i+1})]+d(x_n,x)
\end{equation}
is the maximal $1$-Lipschitz function that agrees with $f$ on $S$ and satisfies \eqref{dsub}. If $S=\dom(M)$ then
\begin{equation}\label{minfM}
\alpha_{[-d,f|_{\mathrm{dom}(M)},M]}(x)=\sup_{(s,t)\in G(M)}\ [f(s)+d(s,t)-d(x,t)]
\end{equation}
and
\begin{equation}\label{maxfM}
\gamma_{[-d,f|_{\mathrm{dom}(M)},M]}(x)=\inf_{s\in\dom (M)}\ [f(s)+d(x,s)],\ \ \ \ x\in X.
\end{equation}
In particular, suppose that $f:S\subset X\to X$ is $1$-Lipschitz. Then $M=I_S$, where $I_S$ is the identity mapping on $S$,  satisfies \eqref{dsubM}. As a consequence $\mathrm{(McShane,\ Whitney)}$,
\begin{equation}\label{minmaxfSI}
\alpha_{[-d,f|_S,I_S]}(x)=\sup_{s\in S}\ [f(s)-d(x,s)]\ \ \ \ \ \ \ and\ \ \ \ \ \ \ \gamma_{[-d,f|_S,I_S]}(x)=\inf_{s\in S}\ [f(s)+d(x,s)]
\end{equation}
are the minimal and maximal $1$-Lipschitz extensions of $f$, respectively.
\end{theorem}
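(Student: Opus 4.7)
The strategy is to recognize this constrained Lipschitz extension problem as a special case of the abstract framework of Sections 3 and 4, with $c=-d$. By Proposition \ref{lipconv}, the proper $-d$-convex functions on $X$ are exactly the $1$-Lipschitz functions, and the condition \eqref{dsub} is precisely the assertion that $h$ is a $-d$-antiderivative of $M$. Extending $f$ to $X$ by $+\infty$ outside $\dom(M)$ turns $f$ into a proper $-d$-antiderivative of $M$ in view of \eqref{dsubM} (the case $x'\notin\dom(M)$ being trivial), so the family $\cA_{[-d,f|_S,M]}$ defined in Section 3 coincides with the set of $1$-Lipschitz extensions $h:X\to\RR$ of $f|_S$ satisfying \eqref{dsub}.

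I would then invoke Theorem \ref{main} with $c=-d$ to obtain at one stroke the nonemptiness of $\cA_{[-d,f|_S,M]}$ and the existence of its extremal members $\alpha_{[-d,f|_S,M]}$ and $\gamma_{[-d,f|_S,M]}$; Proposition \ref{lipconv} guarantees that these functions are real-valued. Convexity of $\cA_{[-d,f|_S,M]}$ follows from Corollary \ref{convA}, since any convex combination of $1$-Lipschitz functions is again $1$-Lipschitz, hence $-d$-convex. The explicit formula \eqref{minfSM} is obtained by substituting $c=-d$ in Theorem \ref{mainformula}, writing Rockafellar's function as a supremum over chains of $\sum[d(x_i,y_i)-d(x_{i+1},y_i)]$, and splitting off the last summand with $x_{n+1}=x$.

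The main work is the upper envelope formula \eqref{maxfSM}, which I would establish by two-sided bounds. For any $h\in\cA_{[-d,f|_S,M]}$, the identity $h(y_{i+1})=h(x_{i+1})+d(x_{i+1},y_{i+1})$ combined with the Lipschitz estimate $h(y_{i+1})\leq h(x_i)+d(x_i,y_{i+1})$ yields $h(x_{i+1})\leq h(x_i)+d(x_i,y_{i+1})-d(x_{i+1},y_{i+1})$; telescoping from $x_1=s$ and adding $h(x)\leq h(x_n)+d(x_n,x)$ shows that the right-hand side of \eqref{maxfSM} is an upper bound on every such $h$, hence on $\gamma_{[-d,f|_S,M]}$. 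For the reverse inequality I would appeal to the duality identity \eqref{conjugamma}, namely $\gamma_{[-d,f|_S,M]}=\alpha_{[-d,f^{-d}|_{M(S)},M^{-1}]}^{-d}$, and apply Theorem \ref{mainformula} to the dual problem on $M^{-1}$; after unwinding the $-d$-transform, Rockafellar's chains in $G(M^{-1})$ translate, upon reindexing, into the chains appearing in \eqref{maxfSM}.

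Finally, the special cases are routine. When $S=\dom(M)$, formula \eqref{minfM} is precisely \eqref{conjufulldom} at $c=-d$, and \eqref{maxfM} follows from \eqref{gammafulldom} with $c=-d$: the resulting double $-d$-transform equals $\sup_y\inf_{s\in\dom(M)}[-d(x,y)+d(s,y)+f(s)]$, and the triangle inequality shows this sup--inf is attained at $y=x$ and equals $\inf_{s\in\dom(M)}[f(s)+d(x,s)]$. When $M=I_S$, every pair $(s,s)\in G(I_S)$ satisfies $d(s,s)=0$, and the triangle inequality collapses the chains in both \eqref{minfSM} and \eqref{maxfSM} to their single-term ($n=1$) contributions, recovering \eqref{minmaxfSI}. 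The main obstacle I anticipate is the bookkeeping in the duality argument for \eqref{maxfSM}; the direct telescoping sketched above handles one inequality cleanly, and the duality side then needs only careful index matching between $G(M)$ and $G(M^{-1})$.
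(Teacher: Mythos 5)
Your reduction and overall architecture coincide with the paper's proof: extend $f$ by $+\infty$ off $\dom(M)$ so that \eqref{dsubM} makes it a proper $-d$-antiderivative of $M$, use Proposition \ref{lipconv} to identify $\cA_{[-d,f|_S,M]}$ with the constrained $1$-Lipschitz extensions, get \eqref{minfSM} from Theorem \ref{mainformula} with $c=-d$, convexity from Corollary \ref{convA}, and \eqref{maxfSM} by dualizing via \eqref{conjugamma} and applying Theorem \ref{mainformula} to the data $(f^{-d}|_{M(S)},M^{-1})$. You do depart in two modest but genuine ways. First, your telescoping estimate, $h(x_{i+1})\leq h(x_i)+d(x_i,y_{i+1})-d(x_{i+1},y_{i+1})$ (from $h(y_{i+1})=h(x_{i+1})+d(x_{i+1},y_{i+1})$ plus the Lipschitz bound), proves $h\leq$ the right-hand side of \eqref{maxfSM} for every admissible $h$ by elementary means, whereas the paper obtains both inequalities at once from duality; this buys a self-contained, easily checkable half of the argument that does not depend on the $c$-transform machinery. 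Second, you derive \eqref{maxfM} from \eqref{gammafulldom} by computing $\big((f+\iota_{\dom(M)})^{-d}\big)^{-d}(x)=\sup_y\inf_{s\in\dom(M)}[f(s)+d(s,y)-d(x,y)]$ and collapsing the sup--inf at $y=x$ via the triangle inequality; the paper instead applies \eqref{conjufulldom} to the dual problem and negates. Your computation is correct and arguably more direct. One step you gesture at (``unwinding the $-d$-transform,'' ``index matching'') that the paper executes explicitly: before writing $f^{-d}=-f$ and feeding $f^{-d}|_{M(S)}$ into Theorem \ref{mainformula}, you must replace the $+\infty$-extended $f$ by an actual $1$-Lipschitz member $h$ of the family (legitimate because $\cA_{[-d,f|_S,M]}=\cA_{[-d,h|_S,M]}$), and you must rewrite the dual boundary data via $f(t)=f(s)+d(s,t)$ for $t\in M(s)$, $s\in S$, which is exactly what converts Rockafellar chains in $G(M^{-1})$ into the chains of \eqref{maxfSM} anchored at points of $S$. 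Neither point is a gap in substance---your telescoping bound already secures one direction unconditionally---but both are needed to make the duality half rigorous.
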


\begin{proof}
We let $f(x')=\infty$ for $x'\in X\setminus\dom(M)$. Now it follows that for every $(x,y)\in G(M)$ and every $x'\in X$ we have
$$
f(x)-d(y,x')\leq f(x')-d(y,x),
$$
that is, the function $f:X\to\RX$ is a $-d$-antiderivative of $M$. This aligns us within the settings of Theorems \ref{main} and \ref{mainformula}. We conclude that the minimal $-d$-antiderivative of $M$ that agrees with $f$ on $S$ is the function in \eqref{minfSM}. Since $\alpha_{[-d,f|_S,M]}$ is $-d$-convex on $X$, it is $1$-Lipschitz there. Furthermore, $\cA_{[-d,f|_S,M]}$ is precisely the set of $1$-Lipschitz functions $h:X\to\RR$ that satisfy $h|_S=f|_S$ and \eqref{dsub}. Therefore $\alpha_{[-d,f|_S,M]}$ is the minimal function with these properties. Since $\Gamma_{-d}(X)$ is convex, $\cA_{[-d,f|_S,M]}$ is also convex (see Corollary \ref{convA}). In the rest of the proof we verify formulae \eqref{maxfSM}-\eqref{minmaxfSI}. To this end, we may assume that $f$ is $-d$-convex on $X$, that is, $f$ is $1$-Lipschitz on $X$. Indeed, if $h\in\cA_{[-d,f|_S,M]}$, then $\cA_{[-d,f|_S,M]}=\cA_{[-d,h|_S,M]}$ and consequently, $\cA_{[-d,f^{-d}|_{M(S)},M^{-1}]}=\cA_{[-d,h^{-d}|_{M(S)},M^{-1}]}$. Thus we may replace $f$ with $h$, if need be. Now we derive the formula for the function $\alpha_{[-d,f^{-d}|_{M(S)},M^{-1}]}$:
\begin{align}
 &\alpha_{[-d,f^{-d}|_{M(S)},M^{-1}]}(x)=\alpha_{[-d,-f|_{M(S)},M^{-1}]}(x)\nonumber\\
 &=\sup_{\begin{array}{c}
                       t\in M(S),\\
                       n\in\mathbb{N},\ y_1=t,\\
                       \{(x_i,y_i)\}_{i=1}^n\subset G(M)\\
                     \end{array} }
-f(t)+\sum_{i=1}^{n-1} [d(x_i,y_i)-d(x_i,y_{i+1})]+d( x_n,y_n)-d(x,x_n)\label{f(s)ins f(t)}\\
&=\sup_{\begin{array}{c}
                       s\in S,\\
                       n\in\mathbb{N},\ y_1\in M(s),\\
                       \{(x_i,y_i)\}_{i=1}^n\subset G(M)\\
                     \end{array} }
-f(s)+\sum_{i=1}^{n-1} [d(x_{i+1},y_{i+1})-d(x_i,y_{i+1})]-d(x_n,x).\label{f(s)ins f(t)2}
\end{align}
In the first equality we used the fact that $f^{-d}=-f$. In \eqref{f(s)ins f(t)2} we used the fact that for each $t\in M(S)$, $f(t)$ is determined by the equation $f(t)-f(s)=d(s,t)$ for every $s\in M^{-1}(t)$. In particular, for $y_1=t$ and $\{(x_i,y_i)\}_{i=1}^n\subset G(M)$, we let $s=x_1$. In order to obtain \eqref{maxfSM}, we apply \eqref{conjugamma} as follows
\begin{equation}\label{gamma-alpha}
\gamma_{[-d,f|_S,M]}=(\gamma_{[-d,f|_S,M]}^{-d})^{-d}=\alpha_{[-d,f^{-d}|_{M(S)},M^{-1}]}^{-d}=-\alpha_{[-d,f^{-d}|_{M(S)},M^{-1}]}.
\end{equation}
Together with \eqref{f(s)ins f(t)2}, this implies implies \eqref{maxfSM}.

If $S=\dom(M)$, we apply \eqref{conjufulldom} and arrive at \eqref{minfM}. Since we assume that $f$ is a $-d$-convex $-d$-antiderivative of $M$, it follows from \eqref{conjufulldom} that
$$
\alpha_{[-d,f^{-d}|_{\mathrm{Im}(M)},M^{-1}]}(x)=\sup_{(s,t)\in G(M)}\ [-f(t)+d(s,t)-d(s,x)]=\sup_{s\in S}\ [-f(s)-d(s,x)].
$$
Since by \eqref{gamma-alpha} we have $\gamma_{[-d,f|_S,M]}=-\alpha_{[-d,f^{-d}|_{\mathrm{Im}(M)},M^{-1}]}$, \eqref{maxfM} follows. If $f:S\subset X\to X$ is $1$-Lipschitz, then $M=I_S$ satisfies \eqref{dsubM}. The set of $1$-Lipschitz extensions of $f$ is precisely the set of $-d$-antiderivatives of $I_S$ that agree with $f$ on $S$, that is $\cA_{[-d,f|_S,I_S]}$. The formulae for the envelope of $\cA_{[-d,f|_S,I_S]}$ now follow directly from \eqref{minfM} and \eqref{maxfM}.
\end{proof}
\\

\begin{remark}
The formulae for the minimal and maximal extensions do not appear to be symmetric. This is because for both formulae we first used the formula for the minimal extension, and then, to obtain the maximal extension, we used duality. However, in both cases, the initial points $s$ where taken from the given set $S$. This required modifications in the formula for the minimal extension in the dual problem. Keeping in mind that the values $h(t)$ for $t\in M(S)$ are uniquely determined for any extension $h\in\cA_{[-d,f|_S,M]}$ by the equality $h(t)=d(s,t)+h(s)=d(s,t)+f(s)$ for every $s\in M^{-1}(t)$, we can express the maximal extension using the values $f(t)$ of the initial points $t\in M(S)$. Allowing this, we see that the negative of  formula \eqref{f(s)ins f(t)}, which is the formula for $\gamma_{[-d,f|_S,M]}$, and formula \eqref{minfSM} for $\alpha_{[-d,f|_S,M]}$ are symmetric. In the case where $S=\dom (M)$, we have
$$
\alpha_{[-d,f|_{\mathrm{dom}(M)},M]}(x)=\sup_{(s,t)\in G(M)}\ [f(s)+d(s,t)-d(x,t)]=\sup_{t\in\im (M)}[f(t)-d(x,t)],
$$
while
$$
\gamma_{[-d,f|_{\mathrm{dom}(M)},M]}(x)=\inf_{s\in\dom (M)}\ [f(s)+d(x,s)].
$$
We see that in this case, the minimal extension $\alpha_{[-d,f|_{\mathrm{dom}(M)},M]}$ is the minimal McShane and Whitney extension of $f|_{\im(M)}$, while the maximal extension $\gamma_{[-d,f|_{\mathrm{dom}(M)},M]}$ is the maximal McShane and Whitney extension of $f|_{\dom (M)}$. In particular, in any case when $\dom (M)=\im (M)$, it follows that $\alpha_{[-d,f|_{\mathrm{dom}(M)},M]}$ and $\gamma_{[-d,f|_{\mathrm{dom}(M)},M]}$ are precisely the minimal and maximal McShane and Whitney extensions of $f|_{\dom (M)}$, respectively.
\end{remark}

A remark regarding the history of some parts of our discussion in this section is in order. Some of the previously published results regarding the subject matter of this section were stated in settings where $X=\mathbb{R}^n$ and some in the generality of metric spaces. Some have been stated for Lipschitz functions and some for H\"{o}lder continuous functions. As we remarked at the beginning of this section, we do not distinguish between these here. The fact that the $-d$-convex functions are precisely the $1$-Lipschitz functions had already appeared in \cite{evemaa}. It has evolved \cite{leg} into a pointwise variant of the characterization $1\Leftrightarrow 2\Leftrightarrow 3 \Leftrightarrow 4$ of Proposition \ref{lipconv}. This evolvement was recalled later in \cite{sin}. Recently, this has been an example of a coupling function $c$ and the corresponding set of $c$-convex functions which is given by different authors; see, for example, \cite{rol,vil}. Regarding the extension of functions, given a function $f:S\subset X\to\mathbb{R}$, we set $f(x)=\infty$ for $x\in X\setminus S$. In \cite{hir} the McShane and Whitney theorem was reestablished with the aid of the infimal convolution from classical convex analysis in $\mathbb{R}^n$. Given $x\in\mathbb{R}^n$, the value of the maximal extension of $f$ at $x$ is then the infimal convolution of $f$ with the norm. In \cite{leg}, $c$-conjugation theory was applied with the function $c=-d$. Then, the maximal extension of McShane and Whitney was recognized as $(f^{-d})^{-d}=-f^{-d}$, while the minimal extension was recognized as $(-f)^{-d}$. In \cite{leg} the study continued to the case where the Lipschitz constant is allowed to vary. For a fixed $\alpha$, this is done by considering the coupling function $c:X\times(X\times\mathbb{R})\to\mathbb{R}$ defined by $c(x,(y,K))=-Kd^\alpha(x,y)$. Then, the $c$-convex functions on $X$ are characterized as the functions on $X$ which satisfy the $\alpha$-growth condition. This is also recalled later in \cite{sin}. The elementary $c$-convex functions in our discussion, that is, the functions $-Kd(\cdot,y)+r$ where $x\in X$ and $K,r\in\mathbb{R}$ are sometimes called \emph{cone functions}. Recently, attention has been drawn to operations, like the infimal convolution in $\mathbb{R}^n$, which are carried out with cone functions in order to solve the problem of \emph{absolutely minimizing Lipschitz extensions}; see, for example, \cite{ACJ}.

\section{Optimal Antiderivatives in the Representation of $c$-Monotone Mappings by $C$-Convex Functions}

In \cite{pen} the discussion regarding the representation of monotone mappings by convex functions is extended to the settings of $c$-convexity and $c$-monotonicity. In particular, the definition of the Fitzpatrick function and some of its basic properties are extended to hold for $c$-monotone mappings (see \eqref{mondef} in Definition \ref{cycmondef}). This discussion requires a type of the following convention: given $c: X \times Y \to \mathbb{R}$, we let
$$
C:(X\times Y)\times(Y\times X)\to\mathbb{R}
$$
be the function defined by
\begin{equation}
C((x,y),(t,s)):=c(x,t)+c(s,y),\ \ \ ((x,y),(t,s))\in (X\times Y)\times(Y\times X).
\end{equation}

\begin{definition}[Fitzpatrick function]
With a mapping $T:X\rightrightarrows Y$ (and the function $c$) we associate the Fitzpatrick
function $F_{[c,T]}:X\times Y\to\RX$, defined by
\begin{align}
F_{[c,T]}(x,y):&=\sup_{(s,t)\in G(T)}\ c(x,t)+c(s,y)-c(s,t)\\
&=\sup_{(s,t)\in G(T)}\ C((x,y),(t,s))-c(s,t),\ \ \ (x,y)\in X\times Y.
\end{align}

\end{definition}

When $T$ is proper and $c$-monotone, $F_{[c,T]}(x,y)\leq c(x,y)$ for each $(x,y)$
which is in $c$-monotone relations with the points of $G(T)$; in particular,
$F_{[c,T]}|_{G(T)}=c|_{G(T)}$. In this case $F_{[c,T]}$ is proper and hence also $C$-convex as a supremum of $C$-convex functions. When $T$ is maximal $c$-monotone,
$c\leq F_{[c,T]}$ with equality only at the points of $G(T)$. Another way to see that $F_{[c,T]}$ is $C$-convex is to observe that
\begin{equation}
F_{[c,T]}(x,y)=(c+\iota_{G(T)})^C(y,x)\ \ \ \ \ \ \forall(x,y)\in X\times Y.
\end{equation}
It follows, after permuting coordinates, that $F_{[c,T]}^C$ is the $C$-convexification of $c+\iota_{G(T)}$.
When $T$ is $c$-monotone, then for any $(x,y)\in X\times Y$ we have
\begin{align}
F_{[c,T]}^C(y,x)&=\sup_{(s,t)\in X\times Y}\ C((x,y),(t,s))-F_{[c,T]}(s,t)\geq\sup_{(s,t)\in G(T)}\ C((x,y),(t,s))-F_{[c,T]}(s,t)\\
&=\sup_{(s,t)\in G(T)}\ C((x,y),(t,s))-c(s,t)=F_{[c,T]}(x,y).
\end{align}
It now follows that when $T$ is maximal $c$-monotone, we have
\begin{equation}
c(x,y)\leq F_{[c,T]}(x,y)\leq F_{[c,T]}^C(y,x)\leq (c+\iota_{G(T)})(x,y)\ \ \ \forall (x,y)\in X\times Y,
\end{equation}
and each one of the inequalities becomes an equality for $(x,y)\in G(T)$.

The Fitzpatrick family of functions is defined by
\begin{equation}
\cF_{[c,T]}:=\big\{f\in\Gamma_C(X\times Y)\ |\ c\leq f,\ f|_{G(T)}=c|_{G(T)}\big\}.
\end{equation}
It is a well-known fact in the classical case that when $T$ is maximal monotone, then $F|_{[\langle\cdot,\cdot\rangle,T]}$ is the minimal function in $\cF|_{[\langle\cdot,\cdot\rangle,T]}$. We make a remark regarding this case towards the end of this section. For a general function $c$, we conclude (as in \cite{pen}) that when $T$ is maximal monotone, then $F_{[c,T]}\in\cF_{[c,T]}$ and, after permuting coordinates, $F_{[c,T]}^C\in\cF_{[c,T]}$. However, so far no minimality property of $F_{[c,T]}$ is available in the generality of $c$-monotone mappings. The framework of optimal antiderivatives does allow us to obtain a minimality property: $F_{[c,T]}$ is minimal in a certain family of $C$-convex $C$-antiderivatives which is determined by $T$. This is carried out below. In order to bring antiderivatives into the discussion we make use of the following notation.

\begin{definition}\label{D}
The subset $\{ ((x,y),(y,x))\ |\ (x,y)\in X\times Y\}$ of\ \ $(X\times
Y)\times(Y\times X)$ is denoted by $\D$. With a mapping
$T:X\rightrightarrows Y$, we associate the mapping $\D_T:X\times Y\rightrightarrows Y\times X$, defined by
\begin{equation}
G(\D_T)=\{ ((x,y),(y,x))\ |\ \ (x,y)\in T\}.
\end{equation}
\end{definition}

The following theorem is the main result of this section. It generalizes most of a similar result in the classical case from \cite{br}. The first part characterizes the $c$-monotonicity of $T$ in terms of $C$-cyclic monotonicity of $\D_T$ and in terms of $\D_T$ having a certain $C$-antiderivative. This gives rise to a family of $C$-antiderivatives of $\D_T$, of which, in the second part of the theorem, the Fitzpatrick function is the minimal member.

\begin{theorem}\label{main6}
(A) For a mapping $T:X\rightrightarrows Y$ the following assertions are equivalent:\\
1.\ $T$ is $c$-monotone;\\
2.\ $\D_T$ is $C$-monotone;\\
3.\ $\D_T$ is $C$-cyclically monotone;\\
4.\ $G(\D_T)\subset G(\partial_C(c+\iota_{G(T)}))$, that is, $c+\iota_{G(T)}$ is a $C$-antiderivative of $\D_T$.\\
\\
Consequently, the following assertions are equivalent:\\
1$'$.\ $T$ is maximal $c$-monotone;\\
2$'$.\ $\D_T$ is maximal $C$-monotone in $\D$;\\
3$'$.\ $\D_T$ is maximal $C$-cyclically monotone in $\D$;\\
4$'$.\ $T$ is a maximal subset in $X\times Y$ such that $G(\D_T)\subset G(\partial_C(c+\iota_{G(T)}))$.\\
\\
(B) If T is monotone, then
\begin{equation}
\alpha_{[C,c|_{G(T)},\D_T]}=F_{[c,T]}.
\end{equation}
If $T$ is maximal monotone, then
\begin{equation}\label{maxmon}
\cA_{[C,c|_{G(T)},\D_T]}\subset\cF_{[c,T]}.
\end{equation}
\end{theorem}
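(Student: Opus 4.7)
My plan is to prove Part (A) by directly relating the $c$-data of $T$ with the $C$-data of $\D_T$ using the structure $C((x,y),(t,s))=c(x,t)+c(s,y)$, and Part (B) by invoking the explicit formula for the minimal antiderivative from Theorem \ref{main} in the case $S=\dom(M)$.

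For Part (A), the key observation is that the coupling $C$ decouples symmetrically into two $c$-pairings, making the $C$-inequalities for $\D_T$ reduce cleanly to $c$-inequalities for $T$. First I would verify $1\Leftrightarrow 2$: writing the $C$-monotonicity inequality for $((x_1,y_1),(y_1,x_1))$ and $((x_2,y_2),(y_2,x_2))$, direct expansion (using $C((x_i,y_i),(y_i,x_i))=2c(x_i,y_i)$ and $C((x_i,y_i),(y_j,x_j))=c(x_i,y_j)+c(x_j,y_i)$) yields exactly twice the $c$-monotonicity inequality for $(x_1,y_1),(x_2,y_2)$. Next, I would establish $1\Leftrightarrow 4$ directly: the condition $(y,x)\in\partial_C(c+\iota_{G(T)})(x,y)$ for $(x,y)\in G(T)$ is trivial when tested against $(x',y')\notin G(T)$ (right-hand side infinite) and, when $(x',y')\in G(T)$, after substitution and cancellation reduces precisely to the $c$-monotonicity inequality for the pair $(x,y),(x',y')$. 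From $1\Leftrightarrow 4$, the implication $4\Rightarrow 3$ is Rockafellar's theorem (any mapping with a $C$-antiderivative is $C$-cyclically monotone), and $3\Rightarrow 2$ is automatic by taking cycles of length two. The maximality equivalences $1'\Leftrightarrow 2'\Leftrightarrow 3'\Leftrightarrow 4'$ then follow from the bijection $(x,y)\leftrightarrow((x,y),(y,x))$ between subsets of $X\times Y$ and subsets of $\D$, under which each of the four properties is preserved and hence maximality transfers.

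For Part (B), I apply formula \eqref{conjufulldom} of Theorem \ref{main} with $M=\D_T$, $f=c+\iota_{G(T)}$ (a $C$-antiderivative of $\D_T$ by Part (A)), and $S=\dom(\D_T)=G(T)$. Substituting $C((x,y),(t,s))=c(x,t)+c(s,y)$ and $C((s,t),(t,s))=2c(s,t)$ collapses the supremum to $\sup_{(s,t)\in G(T)}[c(x,t)+c(s,y)-c(s,t)]$, which is exactly $F_{[c,T]}(x,y)$, establishing the first assertion. For the inclusion under maximality, let $f\in\cA_{[C,c|_{G(T)},\D_T]}$; then $f$ is $C$-convex, agrees with $c$ on $G(T)$, and satisfies $f\geq\alpha_{[C,c|_{G(T)},\D_T]}=F_{[c,T]}$. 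Since $T$ is maximal $c$-monotone, the Fitzpatrick inequality $c\leq F_{[c,T]}$ holds on all of $X\times Y$ (as recalled in the discussion preceding Definition \ref{D}), hence $c\leq f$ globally, giving $f\in\cF_{[c,T]}$.

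The main obstacle I anticipate is the bookkeeping in Part (A): ensuring that when $C$ is expanded into its two $c$-summands, the various cross-terms align correctly to reproduce the $c$-monotonicity inequality with the factor of $2$ absorbed cleanly. The inherent symmetry of $C$ between the $(x,t)$ and $(s,y)$ pairings is what makes the arithmetic work, but the verification has to be carried out carefully on paper. Once Part (A) is in hand, Part (B) is essentially a substitution into formula \eqref{conjufulldom} together with the already-available estimate $c\leq F_{[c,T]}$.
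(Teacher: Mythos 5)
Your proposal is correct and takes essentially the same route as the paper's own proof: expanding $C((x,y),(t,s))=c(x,t)+c(s,y)$ gives $1\Leftrightarrow 2$ and (your substitution-and-cancellation is exactly the paper's displayed identity) $1\Leftrightarrow 4$, after which $4\Rightarrow 3\Rightarrow 2$ via the antiderivative-implies-cyclic-monotonicity direction of the Rockafellar-type theorem, with the maximality equivalences transferred through the bijection $(x,y)\leftrightarrow((x,y),(y,x))$. Part (B) likewise coincides with the paper: apply \eqref{conjufulldom} with $M=\D_T$, $f=c+\iota_{G(T)}$, $S=\dom(\D_T)=G(T)$, noting $C((s,t),(t,s))=2c(s,t)$ to collapse the supremum to $F_{[c,T]}$, and then use $c\leq F_{[c,T]}=\alpha_{[C,c|_{G(T)},\D_T]}\leq h$ under maximal $c$-monotonicity to conclude $\cA_{[C,c|_{G(T)},\D_T]}\subset\cF_{[c,T]}$.
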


\begin{proof}
(A) For any $(x,y)$ and $(s,t)$ in $X\times Y$, we have
\begin{align}\label{cCmonotonicity}
&C((x,y),(y,x))-C((x,y),(t,s))-C((s,t),(y,x))+C((s,t),(t,s))\nonumber\\
=\ &2\big(c(x,y)-c(x,t)-c(s,y)+c(s,t)\big),
\end{align}
which implies $1\Leftrightarrow 2$. We also have
$$
c(x,y)+C((s,t),(y,x))=c(s,t)+C((x,y),(y,x))-[c(x,y)-c(x,t)-c(s,y)+c(s,t)],
$$
which implies that
$$
c(x,y)+C((s,t),(y,x))\leq c(s,t)+C((x,y),(y,x))\ \ \ \Leftrightarrow\ \ \ \ 0\leq c(x,y)-c(x,t)-c(s,y)+c(s,t).
$$
We conclude that $1\Leftrightarrow4$ holds. If $G(\D_T)\subset G(\partial_C(c+\iota_{G(T)}))$, then $\D_T$ is $C$-cyclically monotone. On the other hand, if $\D_T$ is $C$-cyclically monotone, then, in particular, $\D_T$ is $C$-monotone and therefore $T$ is $c$-monotone. Consequently, $G(\D_T)\subset G(\partial_C(c+\iota_{G(T)}))$. This implies that $3\Leftrightarrow4$ and completes the proof of (A).\\
\\
(B) Suppose that $T$ is $c$-monotone. In the setting of Theorem~\ref{main},
we let $M=\D_T$ and $f=c+\iota_{G(T)}$, so that $G(M)\subset G(\partial_C f)$ according to part (A). Letting $S=G(T)=\dom (\D_T)$, we may now consider the family $\cA_{[C,c|_{G(T)},\D_T]}$. Since $S=\dom (M)$, we may apply \eqref{conjufulldom} in order to compute the minimal $C$-antiderivative of $\D_T$ that agrees with $c$ on $G(T)$:
\begin{align}
 \alpha_{[C,c|_{G(T)},\D_T]}(x,y)
&=\sup_{((s,t),(t,s))\in G(\D_T)}\ [c(s,t)+C((x,y),(t,s))-C((s,t),(t,s))]\nonumber\\
\nonumber\\
&=\sup_{(s,t)\in G(T)}\ [c(x,t)+c(s,y)-c(s,t)]=\ F_{[c,T]}(x,y).\nonumber
\end{align}
If $T$ is maximal monotone and $h\in\cA_{[C,c|_{G(T)},\D_T]}$, then $c \leq F_{[c,T]}= \alpha_{[C,c|_{G(T)},\D_T]}\ \leq\ h$, which implies that $h\in\cF_{[c,T]}$.
\end{proof}\\

In the classical case, if $T$ is monotone, then
$$
\cA_{[C,c|_{G(T)},\D_T]}\supset\cF_{[c,T]},
$$
which, together with \eqref{maxmon}, implies that when $T$ is maximal monotone, then
$$
\cA_{[C,c|_{G(T)},\D_T]}=\cF_{[c,T]}.$$
Indeed, when $X$ is a topological vector space, $Y=X^*$ and $c=\langle\cdot,\cdot\rangle$, then we can rely on the fact that if $g$ is G\^{a}teaux differentiable at $x$ and $g\leq f$, where $f$ is convex and $g(x)=f(x)$, then $\nabla g(x)\in\partial f(x)$. In particular, for $f\in\cF_{[c,T]}$, since $\nabla c(x,y)=(y,x)$, then $f$ is a convex antiderivative of $\D_T$, hence, $f\in\cA_{[C,c|_{G(T)},\D_T]}$. However, even in the classical case, without the notion of minimal antiderivatives, when $T$ is monotone but not maximal monotone, there was a problem discussing the minimality of $F_{[c,T]}$. In this case, it may happen that $c\nleq F_{[c,T]}$, that is, $F_{[c,T]}$ is not a member of $\cF_{[c,T]}$. This gap has been filled in \cite{br}, where we consider the extended family $\cA_{[C,c|_{G(T)},\D_T]}$. There, for a monotone mapping $T$, we have seen that $\cA_{[C,c|_{G(T)},\D_T]}\supset\cF_{[c,T]}$ and that $F_{[c,T]}$ is the minimal function in $\cA_{[C,c|_{G(T)},\D_T]}$. A more detailed discussion in the classical case is available in \cite{br}.

In the classical setting in \cite{br}, the equivalence $1\Leftrightarrow 3$ also followed from a direct computation, without having an antiderivative at hand. This computation extends to $c$-monotone mappings without difficulty. It is a cyclic generalization of the identity \eqref{cCmonotonicity}. Indeed, let $\{(x_1,y_1),\dots,(x_n,y_n)\}\subset X\times Y$
and set $(x_1,y_1)=(x_{n+1},y_{n+1})$. We compute the cyclic sum of the ordered
pairs $((x_i,y_i)(y_i,x_i)),\ 1\leq i\leq n$:
\begin{align}
&\sum_{i=1}^n\ C((x_i,y_i),(y_i,x_i))-C((x_{i+1},y_{i+1}),(y_i,x_i))\nonumber\\
=&\sum_{i=1}^n\ c(x_i,y_i)-c(x_{i+1},y_i)-c(x_i,y_{i+1})+c(x_i,y_i)\nonumber\\
=&\sum_{i=1}^n\ c(x_i,y_i)-c(x_{i+1},y_i)-c(x_i,y_{i+1})+c(x_{i+1},y_{i+1}).\nonumber
\end{align}
Letting $n=2$, we get \eqref{cCmonotonicity}. We see that if the ordered pairs $(x_i,y_i),\ 1\leq i\leq n$, are in $c$-monotone relations, then the ordered pairs $((x_i,y_i)(y_i,x_i)),\ 1\leq i\leq n$, are in $C$-cyclically monotone relations. Conversely, if the latter are in $C$-cyclically monotone relations, then, in particular, they are in $C$-monotone relations. We now get the monotone relations of the ordered pairs $(x_i,y_i),\ 1\leq i\leq n$, from the case $n=2$.

\begin{example}
Let $(X,d)$ be a metric space, $X=Y$ and let $c=-d$. Let the metric $D:(X\times X)\times(X\times X)\to\RR$ be defined by $D=-C$ . The identity mapping $I_X$ is $-d$-monotone and we have
\begin{equation}\label{AIFI}
\cF_{[-d,I_X]}=\cA_{[-D,-d|_{G(I_X)},\D_{I_X}]}.
\end{equation}
The associated Fitzpatrick function and its $-D$-transform are given by
\begin{equation}\label{fitzident}
F_{[-d,I_X]}(x,y)=-d(x,y)\ \ \ \ \ and\ \ \ \ \ F_{[-d,I_X]}^{-D}(y,x)=d(y,x),\ \ \ \ \ \ \ (x,y)\in X\times X,
\end{equation}
respectively. If $T:X\rightrightarrows X$ is $-d$-monotone, then $G(T)\cup G(I_X)$ is a monotone set. Whenever either $T$ is maximal $-d$-monotone or $T$ is the $-d$-subdifferential of a $-d$-convex function, then
\begin{equation}\label{AFA}
\cA_{[-D,-d|_{G(T)},\D_T]}\subset\cF_{[-d,T]}\ \subset\ \cA_{[-D,-d|_{G(I_X)},\D_{I_X}]}.
\end{equation}
In particular, in this case,
\begin{equation}\label{FIFT}
F_{[-d,I_X]}(x,y)\leq F_{[-d,T]}(x,y)\leq F^{-D}_{[-d,T]}(y,x)=-F_{[-d,T]}(y,x)\leq F_{[-d,I_X]}^{-D}(y,x)\ \ \ \ \ \forall x,y\in X.
\end{equation}
\end{example}

\begin{proof}
The fact that $I_X$ is $-d$-monotone was discussed in Section 5. The formula for the Fitzpatrick function in \eqref{fitzident} follows from the triangle inequality and the fact that $d(x,x)=0$. Since $F_{[-d,I_X]}$ is $-D$-convex, its $-D$-transform is $-F_{[-d,I_X]}$ (according to Section 5). If $T$ is $-d$-monotone, then $\D_T$ is $-D$ cyclically monotone. Since $G(\D_T)\cup G(I_{X\times X})$ is a $-D$-cyclically monotone set, and since $G(\D_{I_X})\subset G(I_{X\times X})$, the union $G(\D_T)\cup G(\D_{I_X})$ is a $-D$-cyclically monotone set. It now follows that $G(T)\cup G(I_X)$ is a $-d$-monotone set. We conclude that whenever $T$ is either maximal $-d$-monotone or the $-d$-subdifferential of a $-d$-convex function, then $G(I_X)\subset G(T)$. In both cases, we have $-d=F_{[-d,I_X]}\leq F_{[-d,T]}$, and, after $-D$-transforming, we obtain \eqref{FIFT}. The fact that  $-d\leq F_{[-d,T]}$ also implies that $\cA_{[-D,-d|_{G(T)},\D_T]}\subset\cF_{[-d,T]}$. In order to complete the proof, it is now enough to prove that $\cF_{[-d,T]}\subset\cA_{[-D,-d|_{G(I_X)},\D_{I_X}]}$. Indeed, if $h:X\times X\to\RR$ is a $1$-Lipschitz function with respect to $D$ and $h(x,x)=0$ for all $x\in X$, then $-d=F_{[-d,I_X]}\leq h$. It follows that $h$ is a $-D$-antiderivative of $\D_{I_X}$, hence $h\in\cA_{[-D,-d|_{G(I_X)},\D_{I_X}]}$, as asserted.

\end{proof}

\section*{Acknowledgments}
This research was supported by the Israel Science Foundation (Grant
647/07), the Graduate School of the Technion, the Fund for the Promotion
of Research at the Technion and by the Technion's President's Research
Fund. We thank Constantin Z\u{a}linescu for sending us his fine and detailed comments regarding our discussion in \cite{br}, Eva Kopeck\'{a} for raising the issue of convexity of sets of antiderivatives, and the referee for a detailed and insightful report, with many helpful comments and suggestions.

\end{document}